\newtheorem{thm}{Theorem}[section]
\newtheorem{cor}[thm]{Corollary}
\newtheorem{lem}[thm]{Lemma}
\newtheorem{prop}[thm]{Proposition}
\theoremstyle{definition}
\newtheorem{defn}[thm]{Definition}
\newcommand{\supp}{\mathop{\rm supp}}
\theoremstyle{remark}
\numberwithin{equation}{section}
\newcommand{\norm}[1]{\left\Vert#1\right\Vert}
\newcommand{\abs}[1]{\left\vert#1\right\vert}
\newcommand{\Real}{\mathbb R}
\newcommand{\eps}{\varepsilon}
\newcommand{\To}{\longrightarrow}
\newcommand{\hs}{\mathcal{H}}
\newcommand{\sh}{\mathcal{S}}
\newcommand{\Shq}{\mathcal{S}_q(M^2_{\Lambda},L^2(\mu))}
\def\<{\langle}
\def\>{\rangle}
\begin{document}

\title[]{Embeddings of M\"{u}ntz spaces: the Hilbertian case}
\author{S.Waleed Noor}%
\address{Abdus Salam School of Mathematical Sciences, New Muslim Town, Lahore, Pakistan}
\email{waleed\_math@hotmail.com}%

\author{ Dan Timotin}

\address{Institute of Mathematics of the Romanian Academy, Calea Grivi\c tei 21, Bucharest, Romania}%

\email{Dan.Timotin@imar.ro}

\thanks{}%
\subjclass[2010]{46E15, 46E20, 46E35}%
\keywords{M\"untz space, embedding measure, lacunary sequence, Schatten--von Neumann classes}%

\begin{abstract}
Given a strictly increasing sequence $\Lambda=(\lambda_n)$ of nonegative real numbers, with $\sum_{n=1}^\infty \frac{1}{\lambda_n}<\infty$, the M\"untz spaces $M_\Lambda^p$ are defined as the closure in $L^p([0,1])$ of the monomials $x^{\lambda_n}$.
We discuss properties of the embedding
$M_\Lambda^p\subset L^p(\mu)$, where $\mu$ is a finite positive Borel measure on the
interval $[0,1]$. Most of the results are obtained for the Hilbertian case $p=2$, in which we give conditions for the embedding to be bounded, compact, or to belong to the Schatten--von Neumann ideals.
\end{abstract}

\maketitle

\section{Introduction}

The M\"untz--Szasz Theorem states that,  if
$0=\lambda_0<\lambda_1<\dots<\lambda_n<\dots$ is an increasing sequence of
nonnegative real numbers, then the linear span of $x^{\lambda_n}$ is dense in
$C([0,1])$ if and only if $\sum_{n=1}^\infty \frac{1}{\lambda_n}=\infty$. When
$\sum_{n=1}^\infty \frac{1}{\lambda_n}<\infty$, the closed linear span of the
monomials $x^{\lambda_n}$ in different Banach spaces that contain them is
usually not equal to the whole space. In particular, if $1\leq p<+\infty$,  the
closed linear span of the monomials $x^{\lambda_n}$, $n\geq 0$, in  $L^p([0,1])$
is a proper subspace of $L^p([0,1])$. These spaces, called \emph{M\"untz spaces}
and denoted $M_\Lambda^p$, exhibit interesting properties that have not been
very much investigated. We refer principally to the monographies~\cite{Bor95,
Gur05}; recent results appear in~\cite{Alam08, Alam09, sp08, Dan10}.

Our starting point is the paper~\cite{Dan10},  which tries to find conditions
under which the space $M_\Lambda^1$ is continuously embedded in the Lebesgue
space $L^1(\mu)$. In full generality the problem is rather difficult; more
precise results are obtained for special classes of sequences $\Lambda$ and/or
measures $\mu$. One can see therein that, as a general rule,  the embedding
properties are related to the behavior of $\mu$ near the point 1, which for
these problems acts as a kind of ``distinguished boundary'' of the unit
interval.

The purpose of this paper is to investigate embedding results for other M\"untz spaces; we will actually focus on the Hilbert space $M^2_\Lambda$, although occasionally other values of $p$ also enter the picture. In the case of $M^2_\Lambda$ we will  also treat more refined properties of the embedding, namely its possible belonging to Schatten--von Neumann classes. As expected, the behavior of $\mu$ near the point 1 is again decisive. Examples are given to illustrate the difficulties.

The plan of the paper is the following. The next  section contains necessary preliminaries. In Section~\ref{se:general embeddings} we associate to a sequence $\Lambda$ a certain real valued function $\psi$; this function allows to formulate in a unitary manner some general embedding results. Sections~\ref{se:sublinear} and~\ref{se:schatten von neumann} focus on particular classes of measures and sequences, while in Section~\ref{se:interpolation} an interpolation result of Riesz--Thorin type is proved for embedding operators. Finally, Section~\ref{se:examples} presents two interesting examples.

\section{Preliminaries}\label{se:prelim}

\subsection{Riesz sequences and bases}

A sequence $\{f_k\}_{k=1}^\infty$ in a Hilbert space $H$ is said to be \emph{complete}
if span$\{f_k\}_{k\ge1}$ is dense in $H$. A \emph{Riesz sequence} for a separable Hilbert space $H$ is a  sequence
$\{f_k\}_{k=1}^\infty$ such that there exist a constant $C>0$ for which
\begin{equation}\frac{1}{C}\sum_{k=1}^m\abs{c_k}^2\leq\norm{\sum_{k=1}^mc_kf_k}^2\leq C\sum_{k=1}^m\abs{c_k}^2\end{equation}
for every finite scalar sequence $\{c_k\}_{k=1}^\infty$. A \emph{Riesz basis} is a complete Riesz sequence.

The following proposition gathers some well known facts about Riesz sequences (see, for instance,~\cite{Christen03})

\begin{lem}\label{le:gramian}
 Suppose $(x_n)$ is a sequence of vectors in the Hilbert space $\mathcal{F}$, and define its  \emph{Gramian} to be the infinite matrix $\Gamma=(\langle
x_n,x_m\rangle)_{n,m\in\mathbb{N}}$. Then:

(i) $(x_n)$ is a Riesz sequence if and only if $\Gamma$ defines an  invertible operator
on $\ell^2(\mathbb{N})$.

(ii) If $(g_n)_{n\in\mathbb{N}}$ is a Riesz
basis in the Hilbert $\mathcal{E}$, then $g_n\mapsto x_n$ can be extended to a
bounded
linear operator $J:\mathcal{E}\to\mathcal{F}$  if and only if $\Gamma$ defines a bounded operator
on $\ell^2(\mathbb{N})$; we have $\|J\|= \|\Gamma\|^{1/2}$.

(iii) The  sequence $(x_n)$ is a Riesz basis if and only if, for some (equivalently, any)  Riesz basis $(g_n)_{n\in\mathbb{N}}$  in the Hilbert $\mathcal{E}$, $g_n\mapsto x_n$ can be extended to an invertible
linear operator $J:\mathcal{E}\to\mathcal{F}$.
\end{lem}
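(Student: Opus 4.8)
The plan is to route everything through the \emph{synthesis operator}. Let $(e_n)$ denote the standard orthonormal basis of $\ell^2(\mathbb{N})$ and define $T$ on the dense subspace of finitely supported sequences by $Te_n=x_n$, extended linearly. The single computation on which the whole lemma rests is that, for a finitely supported $c=(c_n)$,
\[
\norm{Tc}^2=\Big\langle \sum_n c_n x_n,\;\sum_m c_m x_m\Big\rangle=\sum_{n,m}c_n\overline{c_m}\langle x_n,x_m\rangle=\langle \Gamma c,c\rangle ,
\]
so that $\Gamma$ is exactly the positive quadratic form associated with $T$ (equivalently $\Gamma=T^*T$, up to the harmless transpose coming from index conventions). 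Everything else is a dictionary between properties of $T$ and properties of $\Gamma$.

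For (i), I would read the two Riesz inequalities as $\frac{1}{C}\norm{c}^2\le\langle\Gamma c,c\rangle\le C\norm{c}^2$ on finitely supported $c$. Since such sequences are dense, this is equivalent to $\frac1C I\le\Gamma\le C I$ as bounded self-adjoint operators, which for a positive operator is precisely the assertion that $\Gamma$ is bounded and invertible. Conversely, a bounded invertible $\Gamma$ is automatically positive (the form $\langle\Gamma c,c\rangle=\norm{Tc}^2\ge 0$ on a dense set), hence satisfies $aI\le\Gamma\le\norm{\Gamma}I$ for some $a>0$, which recovers the Riesz bounds with $C=\max(\norm{\Gamma},a^{-1})$. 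For (ii), the same identity shows that $T$ extends to a bounded operator on $\ell^2$ if and only if the form $\langle\Gamma\,\cdot\,,\cdot\rangle$ is bounded, i.e. $\Gamma$ is bounded, and then $\norm{T}^2=\norm{T^*T}=\norm{\Gamma}$. Taking $\mathcal{E}=\ell^2$ and $g_n=e_n$ identifies $J$ with $T$ and yields $\norm{J}=\norm{\Gamma}^{1/2}$; for a general Riesz basis $(g_n)$ one writes $g_n=Ue_n$ with $U$ invertible, so that $g_n\mapsto x_n$ extends to a bounded $J$ precisely when $JU=T$, and since $U$ is invertible this $J=TU^{-1}$ is bounded exactly when $T$, hence $\Gamma$, is bounded.

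For (iii), recall that a Riesz basis is a complete Riesz sequence. By (i), being a Riesz sequence is equivalent to $T$ being bounded and bounded below, i.e. injective with closed range; completeness of $(x_n)$ says that $T$ has dense range. A bounded-below operator with dense range has closed range equal to the whole space, so the two conditions together say exactly that $T$ is invertible. Passing through $J=TU^{-1}$ with $U$ invertible, this is equivalent to invertibility of $J$ for any, hence every, Riesz basis $(g_n)$, which is the claimed equivalence.

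The main obstacle is bookkeeping rather than depth: one must be careful that $\Gamma$ is a priori only a densely defined quadratic form, and argue cleanly that the Riesz inequalities upgrade it to a genuine bounded, positive, invertible operator on all of $\ell^2$. The only genuinely delicate point is the norm identity $\norm{J}=\norm{\Gamma}^{1/2}$, which holds verbatim only for the orthonormal identification $g_n=e_n$: for a general Riesz basis $\norm{J}$ is not determined by $\Gamma$ alone (rescaling a single $g_n$ changes $J$), so I would present the norm equality through this canonical choice, while noting that the boundedness equivalence itself is insensitive to the Riesz basis used.
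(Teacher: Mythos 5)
Your argument is correct and is the standard one: the paper offers no proof of this lemma (it is quoted as well known, with a pointer to [Christen03]), and your synthesis-operator route, with $\Gamma=T^*T$ and the dictionary between properties of $T$ and of $\Gamma$, is exactly the textbook proof one finds there. Your closing caveat is also well taken: the norm identity $\|J\|=\|\Gamma\|^{1/2}$ in (ii) holds verbatim only when $(g_n)$ is orthonormal (rescaling a $g_n$ changes $\|J\|$ but not $\Gamma$), so the statement is slightly imprecise as written; for the paper's applications only the two-sided equivalence up to the Riesz constants of $(g_n)$ is needed, which your argument via $J=TU^{-1}$ delivers.
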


\subsection{Schatten--Von Neumann classes} For $0<q<\infty$
the Schatten--Von Neumann class $\sh_q(\hs_1,\hs_2)$ is formed by the compact Hilbert space operators
$T:\hs_1\to\hs_2$ such that  $|T|=\sqrt{T^*T}:\hs_1\to\hs_1$ has a
family of eigenvalues $\{s_n(T)\}_{n=1}^\infty\in\ell_q$. If we define
\[
\|T\|_q= \left(
\sum_{n=1}^\infty s_n(T)^q
\right)^{1/q},
\]
then we obtain a quasinorm for $0<q<1$ and a norm for $q\ge 1$, with respect to which $\sh_q(\hs_1,\hs_2)$ is complete.
 It is immediate  that $\norm{T}_q\geq\norm{T}_{q'}$ for $q\leq q'$, hence $\sh_q\subset \sh_{q'}$. Again we gather in a lemma  some  properties that we will use; more information can be found, for instance, in~\cite{McCarthy67, Diestel95}. For the sake of this lemma, we will denote by $\|T\|_\infty$ the usual operator norm.

\begin{lem}\label{le:Schatten-vonNeumann}
Suppose $T':\hs_0\To\hs_1$, $T:\hs_1\To\hs_2$ are bounded operators, $0<q,q'\le\infty$.

(i)
If $T\in\sh_q(\hs_1,\hs_2)$, $T'\in \sh_{q'}(\hs_0,\hs_1)$ and
$1/r=1/q+1/q'$, then $T'T\in \sh_r$, and
\[
\|TT'\|_r\le \|T\|_{s}\|T'\|_{s'}.
\]

(ii) If $0<q\leq2$, then \[\|T\|_{\sh_q}^q=\inf\sum_n\|T\phi_n\|_{\hs_2}^q,\] where
the infimum is taken over all orthonormal bases $(\phi_n)_n$ of
$\hs_1$.

(iii) If $2\leq q<\infty$, then
\[\|T\|_{\sh_q}^q=\sup\sum_n\|T\phi_n\|_{\hs_2}^q\]
where the supremum is taken over all orthonormal bases
$(\phi_n)_n$ of $\hs_1$.
\end{lem}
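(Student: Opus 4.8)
The plan is to deduce all three parts from two classical facts in the theory of singular values: Horn's multiplicative inequality for products, and the majorization of the diagonal of a positive operator by its spectrum. Throughout I write $T=\sum_n s_n(T)\langle\,\cdot\,,e_n\rangle f_n$ for the Schmidt decomposition, with $(e_n)$ and $(f_n)$ orthonormal and the singular values $s_n(T)$ arranged in decreasing order.

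For part (i), I would start from Horn's inequality $\prod_{k=1}^N s_k(TT')\le\prod_{k=1}^N s_k(T)s_k(T')$, valid for every $N$; that is, the singular values of the product are log-majorized by the termwise products of the singular values of the factors. Since $t\mapsto \exp(rt)$ is convex for $r>0$, log-majorization upgrades to the weak majorization $\sum_k s_k(TT')^r\le\sum_k s_k(T)^r s_k(T')^r$. Applying H\"older's inequality for sequences with the exponents $q/r$ and $q'/r$, which are conjugate precisely because $1/q+1/q'=1/r$, bounds the right-hand side by $\|T\|_q^r\|T'\|_{q'}^r$; taking $r$-th roots yields the asserted inequality $\|TT'\|_r\le\|T\|_q\|T'\|_{q'}$. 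The endpoint cases in which $q$ or $q'$ equals $\infty$ follow at once from $s_k(TT')\le\|T\|_\infty\,s_k(T')$.

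Parts (ii) and (iii) both rest on a single observation. For an arbitrary orthonormal basis $(\phi_n)$ of $\hs_1$ set $d_n=\|T\phi_n\|^2=\langle|T|^2\phi_n,\phi_n\rangle$, so that $(d_n)$ is the diagonal of the positive operator $|T|^2$ in this basis, whose eigenvalue sequence is $(s_n(T)^2)$. The generalized Schur--Horn theorem says that $(d_n)$ is majorized by $(s_n(T)^2)$. Choosing $(\phi_n)=(e_n)$ gives $d_n=s_n(T)^2$ and hence the value $\sum_n s_n(T)^q=\|T\|_q^q$. It remains to compare $\sum_n d_n^{q/2}$ with $\sum_n (s_n(T)^2)^{q/2}=\|T\|_q^q$. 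When $0<q\le2$ the map $t\mapsto t^{q/2}$ is concave, so majorization reverses the inequality and $\sum_n d_n^{q/2}\ge\|T\|_q^q$, which proves (ii) with the infimum attained at $(e_n)$; when $2\le q<\infty$ the same map is convex, majorization is preserved, $\sum_n d_n^{q/2}\le\|T\|_q^q$, and (iii) follows with the supremum attained at $(e_n)$.

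The main obstacle is the infinite-dimensional bookkeeping rather than any single inequality. Horn's inequality, Schur--Horn majorization, and the convexity/concavity arguments are all classical in finite dimensions, so I would carry out each step for the finite-rank compressions $P_N T$, where $P_N$ is the orthogonal projection onto $\mathrm{span}\{\phi_1,\dots,\phi_N\}$, and then pass to the limit $N\to\infty$ by monotone convergence, using the trace identity $\sum_n d_n=\sum_n s_n(T)^2$ to control rearrangements. The care needed to make the majorization inequalities legitimate for genuinely infinite, and possibly divergent, sequences is where essentially all the work lies.
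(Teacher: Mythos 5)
The paper does not actually prove this lemma: it is quoted as a package of standard facts with references to McCarthy and to Diestel--Jarchow--Tonge, so there is no in-paper argument to match yours against; the relevant question is only whether your reconstruction is sound. Your part (i) is the classical proof (Horn's log-majorization of $s_k(TT')$ by $s_k(T)s_k(T')$, Weyl's passage from log-majorization to $\sum_k s_k(TT')^r\le\sum_k s_k(T)^r s_k(T')^r$ via convexity of $t\mapsto e^{rt}$, then H\"older with the conjugate pair $q/r,\,q'/r$), and it is correct; note in passing that the statement itself has typos --- the composition should be $TT':\hs_0\to\hs_2$ and the right-hand side should read $\|T\|_q\|T'\|_{q'}$, which you have silently and correctly repaired. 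For (ii) and (iii) your route through the infinite-dimensional Schur--Horn theorem does reach the right conclusion, but, as you yourself flag, the concave half is genuinely delicate: order reversal under majorization for concave functions requires exact (not merely weak) majorization, and when $\Tr|T|^2=\infty$ the Hardy--Littlewood--P\'olya theorem is not an off-the-shelf statement. All of that machinery can be bypassed. Write $\|T\phi_n\|^2=\langle|T|^2\phi_n,\phi_n\rangle$ and apply scalar Jensen to the probability measure $\langle E(\cdot)\phi_n,\phi_n\rangle$, where $E$ is the spectral measure of $|T|^2$; concavity of $t\mapsto t^{q/2}$ for $0<q\le2$ gives
\[
\bigl\langle |T|^2\phi_n,\phi_n\bigr\rangle^{q/2}\ \ge\ \bigl\langle |T|^{q}\phi_n,\phi_n\bigr\rangle ,
\]
with the inequality reversed for $q\ge2$ by convexity. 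Summing over $n$ yields $\sum_n\|T\phi_n\|^q\ge\Tr|T|^q=\|T\|_q^q$ for every orthonormal basis when $q\le2$ (resp.\ $\le$ when $q\ge2$), with no rearrangement, no majorization, and with divergent sums handled automatically; equality at the Schmidt basis $(e_n)$, where $\|Te_n\|=s_n(T)$, shows the infimum (resp.\ supremum) is attained. I would replace your Schur--Horn step by this Jensen inequality; with that substitution your outline is complete and the "infinite-dimensional bookkeeping" you were worried about disappears.
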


Note that the right hand side in (i) and (ii) may be infinite, meaning that $T\not\in\sh_q$.   We will use the following corollary of Lemma~\ref{le:gramian} (iii) and Lemma~\ref{le:Schatten-vonNeumann} (ii).

\begin{cor}\label{co:Schatten von Neumann}
Suppose $T:\hs_1\To\hs_2$ is a bounded operator and $(x_n)$ is a Riesz basis in $\hs_1$.

(i) If $\frac{Tx_n}{\|Tx_n\|}$ is a Riesz sequence in $\hs_2$, then, for all $q>0$, $T\in \sh_q$ if and only if $(\|Tx_n\|)\in\ell^q$.

(ii) If
$0<q\le 2$, then
\[
\|T\|_{\sh_q}^q\le\sum_n\|Tx_n\|_{\hs_2}^q.
\]
In particular, $T\in\sh_q$ whenever the right hand side is finite for some Riesz basis $(x_n)$.
\end{cor}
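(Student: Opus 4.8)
The plan is to replace the Riesz basis $(x_n)$ by the orthonormal basis of a reference space via the isomorphism supplied by Lemma~\ref{le:gramian}(iii), so that both parts reduce to statements about operators expressed in an orthonormal basis, where Lemma~\ref{le:Schatten-vonNeumann} applies. Concretely, let $(e_n)$ be an orthonormal basis of $\hs_1$; since $(x_n)$ is a Riesz basis, Lemma~\ref{le:gramian}(iii) gives an invertible operator $J:\hs_1\To\hs_1$ with $Je_n=x_n$. Thus $TJe_n=Tx_n$, and the behaviour of the sequence $(Tx_n)$ is encoded in the single operator $TJ$, to which the orthonormal-basis formulas of Lemma~\ref{le:Schatten-vonNeumann} can be applied directly. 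Throughout, the ideal property Lemma~\ref{le:Schatten-vonNeumann}(i) (with one factor in $\sh_\infty$, i.e.\ merely bounded) is what lets me pass between $T$, $TJ$, and related factorizations without leaving $\sh_q$.

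For part (ii), I would apply Lemma~\ref{le:Schatten-vonNeumann}(ii) to $TJ$: since the infimum there is over all orthonormal bases of $\hs_1$, choosing the particular basis $(e_n)$ gives
\[
\norm{TJ}_{\sh_q}^q=\inf_{(\phi_n)}\sum_n\norm{TJ\phi_n}_{\hs_2}^q\le\sum_n\norm{TJe_n}_{\hs_2}^q=\sum_n\norm{Tx_n}_{\hs_2}^q .
\]
Writing $T=(TJ)J^{-1}$ and invoking the ideal property then bounds $\norm{T}_{\sh_q}$ by a constant multiple of $\left(\sum_n\norm{Tx_n}_{\hs_2}^q\right)^{1/q}$; in particular the right-hand side being finite forces $T\in\sh_q$, which is the decisive conclusion used later.

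For part (i), I would exploit the extra hypothesis that $y_n:=Tx_n/\norm{Tx_n}$ is a Riesz sequence. Setting $\alpha_n=\norm{Tx_n}$, define $K:\hs_1\To\hs_2$ by $Ke_n=y_n$ and the diagonal operator $D:\hs_1\To\hs_1$ by $De_n=\alpha_ne_n$. Because $(y_n)$ is a Riesz sequence, $K$ is bounded and bounded below (so it has a bounded left inverse $K'$ with $K'K=I$), while the singular values of $D$ are exactly $(\abs{\alpha_n})=(\norm{Tx_n})$, whence $D\in\sh_q$ iff $(\norm{Tx_n})\in\ell^q$. The factorization $TJe_n=\alpha_ny_n=KDe_n$ gives $TJ=KD$, so that $T=KDJ^{-1}$ and $D=K'TJ$. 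Applying the ideal property to each identity shows $T\in\sh_q\Leftrightarrow D\in\sh_q\Leftrightarrow(\norm{Tx_n})\in\ell^q$, valid for every $q>0$.

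The main obstacle — and the place requiring genuine care — is the passage from the orthonormal to the Riesz setting. Lemma~\ref{le:Schatten-vonNeumann}(ii) is stated for orthonormal bases, and inserting the isomorphism $J$ (or $K$) unavoidably introduces its norm and that of its inverse; these constants are harmless for the qualitative equivalences in (i) and for the membership assertion in (ii), but one must make sure they do not corrupt the estimate one actually needs. Verifying that the factorizations $TJ=KD$, $T=KDJ^{-1}$, $D=K'TJ$ keep every intermediate operator in the relevant Schatten class — via Lemma~\ref{le:Schatten-vonNeumann}(i) with a bounded factor — is the one step I would treat most carefully, especially for $q<1$ where $\norm{\cdot}_{\sh_q}$ is only a quasinorm.
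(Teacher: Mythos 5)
Your proposal is correct and follows exactly the route the paper intends: the paper gives no written proof, saying only that the corollary follows from Lemma~\ref{le:gramian}~(iii) and Lemma~\ref{le:Schatten-vonNeumann}~(ii), and your factorizations $T=(TJ)J^{-1}$, $TJ=KD$, $D=K'TJ$ combined with the ideal property are the natural way to carry that out; part~(i) in particular is complete and valid for all $q>0$, including the quasinorm range $q<1$. One remark on the caveat you raise in part~(ii): the factor $\|J^{-1}\|^q$ is not an artifact of your argument but is genuinely unavoidable, because the inequality $\|T\|_{\sh_q}^q\le\sum_n\|Tx_n\|^q$ as literally printed fails for a non-normalized Riesz basis --- take $x_n=\tfrac12 e_n$ and $T$ the rank-one orthogonal projection onto $\mathbb{C}e_1$, so the left-hand side is $1$ while the right-hand side is $2^{-q}$. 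What survives, and what the paper actually uses later (in the proof of Theorem~\ref{th:quasilacunary schatten von neumann}), is precisely the qualitative membership assertion you do establish: finiteness of $\sum_n\|Tx_n\|^q$ for some Riesz basis forces $T\in\sh_q$.
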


The elements of $\sh_2(\hs_1,\hs_2)$
are called \emph{Hilbert--Schmidt operators} and by Lemma~\ref{le:Schatten-vonNeumann}
the \emph{Hilbert-Schmidt norm} of $T\in\sh_2(\hs_1,\hs_2)$ is
given by
$$\|T\|_{\sh_2}=(\sum_n \|T\phi_n\|_{\hs_2}^2)^{1/2}$$ where $(\phi_n)_n$
is any orthonormal basis of $\hs_1$.

\subsection{M\"{u}ntz spaces and embeddings}
We denote by $m$ the Lebesgue measure on $[0,1]$ and by $\norm{\cdot}_p$
the norm in $L^p(m)$ for $1\leq p\leq\infty$.

Suppose $\Lambda=\{\lambda_n\}_{n\geq1}$ an increasing sequence
of positive real numbers with $\sum^\infty_{n=1}\frac{1}{\lambda_n}<\infty$. As discussed in the introduction, the M\"untz space $M^p_\Lambda$ is defined to be the closure of the monomials $x^{\lambda_n}$, $n\ge1$ in $L^p(m)$; which is a proper  subspace of $L^p(m)$ by the M\"untz-Szasz Theorem. It is proven in~\cite{Bor95,Gur05} that the functions in $M^p_\Lambda$ are actually real analytic on the interval $(0,1)$ and continuous on $[0,1)$.

We will use the following two results concerning M\"untz spaces that appear in~\cite{Gur05}.

\begin{lem}[\cite{Gur05}, Corollary 8.1.2]\label{le:point estimate}
Any M\"{u}ntz polynomial $f(x)=\sum_{k=1}^m\alpha_kx^{\lambda_k}$ satisfies
$$\abs{f(x)}\leq 2(\sum_{k=1}^mx^{\lambda_k\beta_k})\norm{f}_\infty$$
for any $x\in[0,1]$ and any $\beta_k\geq0$ with $\sum_{k=1}^m\beta_k=1$.
\end{lem}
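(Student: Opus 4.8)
The plan is to read the inequality as a bound on the norm of the evaluation functional $f\mapsto f(x)$ on the span of the $x^{\lambda_k}$ equipped with the sup norm, and to produce that bound from the global analytic structure of $f$ rather than from its coefficients. It is worth noting at the outset that a purely termwise argument is hopeless: to get the result one would want $\abs{\alpha_k}x^{\lambda_k}\le 2\norm{f}_\infty\, x^{\lambda_k\beta_k}$ for every $x$, which at $x=1$ forces $\abs{\alpha_k}\le 2\norm{f}_\infty$, and this already fails for $f(x)=x-x^2$, where $\norm{f}_\infty=1/4$ while $\abs{\alpha_k}=1$. Hence the coefficients must be treated globally. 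The role of the weights $\beta_k$ (with $\sum_k\beta_k=1$) is exactly to manufacture, for $x$ near $0$, a majorant decaying no faster than $x^{\lambda_k\beta_k}$, i.e. slowly enough to dominate $\abs{f}$; since $x^{\lambda_k}\le x^{\lambda_k\beta_k}$ on $[0,1]$, the asserted right-hand side is genuinely larger than $\sum_k x^{\lambda_k}$, and the dependence on $\beta$ is essential (the cases $x\in\{0,1\}$ being trivial, I would assume $x\in(0,1)$).

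The concrete mechanism I would use is interpolation through the fact that $\{x^{\lambda_1},\dots,x^{\lambda_m}\}$ is a Chebyshev system, indeed a Descartes (totally positive) system, on $(0,1]$. For any nodes $0<\tau_1<\dots<\tau_m\le 1$ one has the exact reproduction $f(x)=\sum_{i}\ell_i(x)f(\tau_i)$, where the cardinal functions $\ell_i$ lie in the span and are ratios of positive generalized Vandermonde determinants $\det(\tau_j^{\lambda_k})$; consequently $\abs{f(x)}\le\bigl(\sum_i\abs{\ell_i(x)}\bigr)\norm{f}_\infty$. The task then reduces to choosing the nodes so that the Lebesgue-type sum $\sum_i\abs{\ell_i(x)}$ is controlled by $2\sum_k x^{\lambda_k\beta_k}$. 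An equivalent route, likely closer to the source, passes through the analytic continuation of $f$ to the slit plane $\mathbb{C}\setminus(-\infty,0]$, or equivalently to the exponential sum $F(t)=f(e^{-t})=\sum_k\alpha_k e^{-\lambda_k t}$, which satisfies $\abs{F}\le\norm{f}_\infty$ on $[0,\infty)$, and compares $F$ with the explicit majorant $\sum_k e^{-\lambda_k\beta_k t}$; here the factor $\sum_k x^{\lambda_k\beta_k}=\sum_k e^{-\lambda_k\beta_k t}$ is precisely the controlled rate of decay as $t\to\infty$, that is, as $x\to 0$.

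The main obstacle is obtaining exactly the factor $2\sum_k x^{\lambda_k\beta_k}$, both its shape and the constant. A generic bounded-analytic-function argument returns only the trivial $\abs{f(x)}\le\norm{f}_\infty$, because $x$ sits on the part of the boundary where that bound already holds; the improvement must exploit that there are only finitely many exponents $\lambda_1<\dots<\lambda_m$. In the interpolation route this finiteness enters through the positivity of the generalized Vandermonde determinants, and the crux is estimating their ratios: one must show that for a suitable node choice the Lebesgue sum is at most $2\sum_k x^{\lambda_k\beta_k}$, with the $2$ absorbing the worst case and the $\beta_k$ appearing through a weighted arithmetic–geometric (Hölder) balancing of the determinant ratios, which is plausibly where the normalization $\sum_k\beta_k=1$ is consumed. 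I expect this determinant estimate, rather than the reproduction identity or the reframing, to be the real work; once it is in place the conclusion is immediate, and the elementary inequality $x^{\lambda_k}\le x^{\lambda_k\beta_k}$ serves only to pass between equivalent presentations of the majorant.
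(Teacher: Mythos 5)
First, a point of reference: the paper does not prove this lemma at all --- it is quoted from \cite{Gur05} (Corollary 8.1.2) --- so there is no in-paper argument to compare against, and your proposal must stand on its own. It does not, because you explicitly leave the central quantitative step unproved. Both routes you describe (Lagrange-type reproduction $f(x)=\sum_i\ell_i(x)f(\tau_i)$ through a Descartes system, and the exponential-sum reformulation) only reduce the statement to showing that a Lebesgue-type sum $\sum_i\abs{\ell_i(x)}$, for some unspecified choice of nodes, is bounded by $2\sum_k x^{\lambda_k\beta_k}$; you yourself call this determinant estimate ``the real work'' and say only that it is ``plausibly'' where $\sum_k\beta_k=1$ is consumed. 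That estimate \emph{is} the lemma, so nothing has been established. Moreover it is doubtful the route can be executed as described: Lebesgue functions of interpolation in general Chebyshev systems are hard to control with absolute constants, and no mechanism is offered for extracting the specific majorant $\sum_k x^{\lambda_k\beta_k}$ with the constant $2$ from ratios of generalized Vandermonde determinants.

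You are also, I think, misreading where the content lies when you say ``the dependence on $\beta$ is essential.'' The statement is in fact an immediate corollary of a single, $\beta$-free estimate (this is Lemma 8.1.1 of \cite{Gur05}, from which the quoted Corollary 8.1.2 is derived): if $\lambda>0$ satisfies $\lambda\sum_{k=1}^m\lambda_k^{-1}\le 1$, then $\abs{f(x)}\le 2\norm{f}_\infty x^{\lambda}$ on $[0,1]$. Granting that, set $c=(\sum_{k=1}^m\lambda_k^{-1})^{-1}$ and observe that $\min_k\lambda_k\beta_k\le c$ (otherwise $1=\sum_k\beta_k>c\sum_k\lambda_k^{-1}=1$), whence for $x\in(0,1]$
\[
\sum_{k=1}^m x^{\lambda_k\beta_k}\;\ge\;\max_k x^{\lambda_k\beta_k}\;=\;x^{\min_k\lambda_k\beta_k}\;\ge\;x^{c},
\]
and the claimed bound follows from $\abs{f(x)}\le 2\norm{f}_\infty x^{c}$ (the case $x=0$ being trivial since $f(0)=0$). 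So the weights and the sum over $k$ are a flexible repackaging convenient for applications (as in the proof of Theorem~\ref{th:quasilacunary schatten von neumann}, where the $\beta_j$ are constant on blocks), not the sharp content, and your proposed ``weighted AM--GM balancing of determinant ratios'' aims at the wrong target. What a complete proof still requires is the single-exponent growth bound above; that is the genuinely nontrivial inductive estimate carried out in \cite{Gur05}, and your proposal contains no substitute for it.
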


\begin{lem}[\cite{Gur05}, Proposition 8.2.2]\label{le:Bernstein}
There is a constant $K>0$ (depending only on $\Lambda$) such that, if $f(x)=\sum_{k=1}^m\alpha_kx^{\lambda_n}$, then
$$\norm{f'}_\infty\leq K(\sum_{k=1}^m\lambda_k)\norm{f}_\infty.$$
\end{lem}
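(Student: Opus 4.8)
The plan is to prove the inequality by treating separately the behaviour of $f$ away from the point $0$ and its behaviour near $0$, the latter being where the monomials $x^{\lambda_k}$ have a branch singularity and where the summability hypothesis $\sum 1/\lambda_n<\infty$ becomes essential.

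The first ingredient is the weighted estimate $\|x f'(x)\|_\infty\le C\bigl(\sum_{k=1}^m\lambda_k\bigr)\|f\|_\infty$ with an absolute constant $C$ (Newman's inequality). I would obtain it through the substitution $x=e^{-t}$, $t\in[0,\infty)$: under it $f(x)=\sum_k\alpha_k x^{\lambda_k}$ turns into the exponential sum $g(t)=\sum_k\alpha_k e^{-\lambda_k t}$, and a direct computation gives $xf'(x)=-g'(t)$. The claim then becomes a Bernstein-type inequality $\|g'\|_{L^\infty[0,\infty)}\le C\bigl(\sum_k\lambda_k\bigr)\|g\|_{L^\infty[0,\infty)}$ for exponential sums, provable by a standard extremal/duality argument on the span of $\{e^{-\lambda_k t}\}$. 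On $[\tfrac12,1]$ this at once yields $\|f'\|_{[1/2,1]}\le 2C\bigl(\sum_k\lambda_k\bigr)\|f\|_\infty$, since $1/x\le2$ there; note that the factor $\sum_k\lambda_k$ is genuinely needed in this range, as the example $f(x)=x^{\lambda_1}$ already shows.

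It remains to control $f'$ on $[0,\tfrac12]$, where the weight $x$ is of no help. Here I would use that $f$ vanishes to order $\lambda_1$ at $0$ in a quantitative way: Lemma~\ref{le:point estimate} with $\beta_1=1$, $\beta_k=0$ $(k\ge2)$ gives $|f(x)|\le 2x^{\lambda_1}\|f\|_\infty$. Since every element of $M^p_\Lambda$ extends analytically to $\mathbb{C}\setminus(-\infty,0]$, I would propagate this bound to the disc $\{\,|z-x|\le x/2\,\}$ about a point $x\in(0,\tfrac12]$ and apply the Cauchy integral formula for $f'$, obtaining $|f'(x)|\le C'x^{\lambda_1-1}\|f\|_\infty$. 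Provided $\lambda_1\ge1$---which we may assume, and without which $\|f'\|_\infty$ would simply be infinite---the factor $x^{\lambda_1-1}$ stays bounded on $[0,\tfrac12]$, so that $\|f'\|_{[0,1/2]}\le C''\|f\|_\infty\le C''\bigl(\sum_k\lambda_k\bigr)\|f\|_\infty$. Combining the two regions gives the stated inequality with $K$ depending only on $\Lambda$.

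The main obstacle is precisely this near-$0$ estimate: passing from the real pointwise bound of Lemma~\ref{le:point estimate} to a bound on a complex disc that shrinks toward the branch point at $0$, which forces one either to establish a complex analogue of the point estimate or to feed the Cauchy formula through the maximum principle. An alternative that avoids complex analysis is to show that the summability condition makes $(x^{\lambda_k})$ an unconditional basic sequence in $C[0,1]$ with uniformly bounded coordinate functionals, so that $|\alpha_k|\le C(\Lambda)\|f\|_\infty$; termwise differentiation together with the convergence of $\sum_k\lambda_k 2^{-\lambda_k}$ then bounds $f'$ on $[0,\tfrac12]$ directly. In either approach it is the region near $0$, and the role of the hypothesis $\sum 1/\lambda_n<\infty$, that carries the whole weight of the argument.
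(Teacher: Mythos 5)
The paper offers no proof of this lemma: it is imported verbatim from \cite{Gur05} (Proposition 8.2.2), so there is no internal argument to compare yours against. Judged on its own, your outline has a sensible architecture --- Newman's weighted inequality away from the origin, a separate coefficient-based argument near it, and the correct observation that one must have $\lambda_1\ge 1$ for the statement to be non-vacuous --- but each half rests on something you have not actually supplied.

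Away from $0$, the entire content of the lemma is the inequality $\|xf'(x)\|_\infty\le C\bigl(\sum_k\lambda_k\bigr)\|f\|_\infty$; reducing it via $x=e^{-t}$ to a Bernstein inequality for exponential sums on $[0,\infty)$ is correct but only restates the problem, and describing that inequality as ``a standard extremal/duality argument'' conceals the one genuinely hard step (Newman's proof via the Blaschke-type product $\prod_j(z-\lambda_j)/(z+\lambda_j)$ and a contour-integral representation of $g'(0)$). If you intend to quote Newman's inequality you should say so explicitly; as written, the core estimate is asserted rather than proved. Near $0$, your first route is incomplete for exactly the reason you flag: Lemma~\ref{le:point estimate} bounds $|f|$ only on the real segment, and neither the Cauchy formula nor the maximum principle converts that into a bound on a complex disc about $x$ (the segment is not the boundary of any region containing the disc), so $|f'(x)|\le C'x^{\lambda_1-1}\|f\|_\infty$ does not follow from what you have. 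Your second route is the right repair, but the specific claim that the coordinate functionals are \emph{uniformly} bounded, $|\alpha_k|\le C(\Lambda)\|f\|_\infty$ with $C$ independent of $k$, is false in general: by the Clarkson--Erd\H{o}s-type estimates recalled in Section~\ref{se:general embeddings}, the sharp constants are $d_k^{-1}=e^{\gamma_k\lambda_k}$ with $\gamma_k\to0$, which typically tend to infinity. What saves the argument is that only this subexponential bound is needed: for $x\in[0,1/2]$ one has $|f'(x)|\le\sum_k d_k^{-1}\lambda_k x^{\lambda_k-1}\|f\|_\infty\le\psi'(1/2)\,\|f\|_\infty$, which is precisely the case $k=1$ of~\eqref{eq:basic estimate}, and since $\sum_k\lambda_k\ge\lambda_1>0$ this constant can be absorbed into $K$. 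With the near-$0$ part run through $\psi'$ rather than a uniform coefficient bound, and with Newman's inequality properly cited or proved, the argument closes.
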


For a fixed $p\ge1$, a positive measure $\mu$ on $[0,1]$ is
called  $\Lambda_p$ -\emph{embedding} if there is a constant
$C>0$ such that $$\norm{g}_{L^p(\mu)}\leq
C\norm{g}_p$$ for all polynomials $g\in
M^p_\Lambda$. Whenever $p$ is clear from the context,
we will remove the subscript $p$ and use the notation
$\Lambda$-embedding.

It follows easily from the definition (see~\cite{Dan10}) that a $\Lambda_p$-embedding measure $\mu$ has to satisfy $\mu({1})=0$. Therefore, as in Remark 2.5 of \cite{Dan10}, we may extend the embedding to all $f\in M^p_\Lambda$: if $\mu$ is $\Lambda_p$-embedding, then
$M^p_\Lambda\subset L^p(\mu)$ and $\norm{f}_{L^p(\mu)}\leq C\norm{f}_p$ for all $f\in M^p_\Lambda$.
For a $\Lambda_p$-embedding $\mu$ we denote by $i^p_\mu$ the embedding operator
$i^p_\mu:M^p_\Lambda\hookrightarrow L^p(\mu)$, which is bounded. If $0<\eps<1$, then the
interval $[1-\eps,1]$ will be denoted by $J_\eps$.

If $T:\mathcal{E}\to\mathcal{F}$ is a bounded operator on
Banach spaces, we define by
$\norm{T}_e=\inf_\mathcal{K}\|T+\mathcal{K}\|$ the
$\emph{essential norm}$ of an operator, where the infimum is
taken over all compact operators
$\mathcal{K}:\mathcal{E}\to\mathcal{F}$. This norm measures
how far an operator is from being compact. In
particular, $T$ is compact if and only if $\|T\|_e=0$. If $\mu$
is a positive measure on $[0,1]$, we will denote by $\mu_m$ the
measure equal to $\mu$ on $[0,1-\frac{1}{m}]$ and $0$
elsewhere, and $\mu_m'=\mu-\mu_m$.

The next proposition gathers some facts that are analogues of the corresponding results obtained in~\cite{Dan10} for $p=1$ and can be proved by adapting the methods therein; we state them without further comment.

\begin{prop}\label{pr:revision}
(i) If for some $\eps>0$ the restriction of $\mu$ to the interval $[1-\eps,1]$ is absolutely
continuous with respect to $m\mid_{[1-\eps,1]}$, with essentially bounded density, then $\mu$ is
$\Lambda$-embedding for any $\Lambda$.

(ii) If  $\supp\mu\subset[0,1-\eps]$, then $i_\mu$ is compact.

(iii) If $\mu$ is an embedding measure, then
\[\norm{i_\mu}_e=\lim_{n\rightarrow\infty}\norm{i_{\mu'_n}}.\]

(iv) Suppose there exists $\delta>0$ such that
$d\mu|_{J_\delta}=hdm|_{J_\delta}$ for some bounded measurable function $h$ with
$\lim_{t\rightarrow1}h(t)=a$. Then $i_\mu$ is bounded and $\norm{i_\mu}_e=a^{1/p}$.
\end{prop}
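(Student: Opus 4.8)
The plan is to treat the four parts in the order stated, since (iii) and (iv) will rest on (i) and (ii). The common workhorse is the standard M\"untz estimate that on any interval $[0,1-\eps]$ bounded away from $1$ the sup norm of a M\"untz function is controlled by its $L^p$ norm, $\norm{f}_{L^\infty([0,1-\eps])}\le C_\eps\norm{f}_p$, together with the local normality of $M^p_\Lambda$: convergence in $L^p$ forces locally uniform convergence on $[0,1)$, which follows from the analyticity recalled in the preliminaries and the Clarkson--Erd\H{o}s theory from~\cite{Bor95,Gur05}. These two facts replace the $p=1$ tools of~\cite{Dan10}.

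For (i) I would split $\mu=\mu|_{[0,1-\eps]}+\mu|_{[1-\eps,1]}$. On $[1-\eps,1]$ the hypothesis $d\mu=h\,dm$ with $h$ essentially bounded gives directly $\int_{[1-\eps,1]}\abs{f}^p\,d\mu\le\norm{h}_\infty\norm{f}_p^p$. On $[0,1-\eps]$ the measure is finite, so $\int_{[0,1-\eps]}\abs{f}^p\,d\mu\le\mu([0,1-\eps])\,\norm{f}_{L^\infty([0,1-\eps])}^p\le C\norm{f}_p^p$ by the uniform estimate above; adding the two proves the embedding. For (ii), given a bounded sequence $(f_j)$ in $M^p_\Lambda$, the uniform estimate makes it locally uniformly bounded on $[0,1)$; normality then yields a subsequence converging uniformly on $[0,1-\eps]$, hence, since $\mu$ is finite and supported there, converging in $L^p(\mu)$. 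Thus $i_\mu$ carries bounded sequences to relatively compact ones and is compact.

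For (iii) I would write $i_\mu=i_{\mu_n}+(i_\mu-i_{\mu_n})$, where $i_{\mu_n}f=f\chi_{[0,1-1/n]}$ is viewed in $L^p(\mu)$. By (ii) each $i_{\mu_n}$ is compact, while $(i_\mu-i_{\mu_n})f=f\chi_{(1-1/n,1]}$ satisfies $\norm{(i_\mu-i_{\mu_n})f}_{L^p(\mu)}=\norm{f}_{L^p(\mu'_n)}$, so this operator has norm exactly $\norm{i_{\mu'_n}}$. Subtracting the compact $i_{\mu_n}$ gives $\norm{i_\mu}_e\le\norm{i_{\mu'_n}}$ for every $n$, and since $\mu'_{n+1}\le\mu'_n$ the sequence decreases, so $\norm{i_\mu}_e\le\lim_n\norm{i_{\mu'_n}}=:L$. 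The reverse inequality is the crux. I would pick near-optimizers $u_n$ with $\norm{u_n}_p=1$ and $\int_{(1-1/n,1]}\abs{u_n}^p\,d\mu\ge L^p-1/n$, which must concentrate their mass near $1$. Passing (in the reflexive space $M^p_\Lambda$) to a weakly convergent subsequence $u_n\rightharpoonup u$, one has $\norm{i_{\mu'_n}u}\to0$, because $u\in L^p(\mu)$ and $(1-1/n,1]$ shrinks to the $\mu$-null point $1$; hence $v_n=u_n-u$ is weakly null with $\norm{i_\mu v_n}\ge\norm{i_{\mu'_n}v_n}\ge L-o(1)$. Normalizing to $\hat v_n=v_n/\norm{v_n}_p$ yields a weakly null unit sequence with $\liminf_n\norm{i_\mu\hat v_n}\ge L$; since any compact $K$ sends $\hat v_n$ to $0$ in norm, $\norm{i_\mu+K}\ge\limsup_n\norm{(i_\mu+K)\hat v_n}\ge L$, giving $\norm{i_\mu}_e\ge L$.

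I expect the normalization step to be the main obstacle. For $p=2$ it is clean: $\norm{v_n}_2^2\to1-\norm{u}_2^2\le1$, so dividing only improves the lower bound, and $\norm{u}_2<1$ whenever $L>0$ (otherwise $u_n\to u$ strongly, contradicting concentration near $1$). For general $p$ there is no inner product to control $\norm{v_n}_p$, and one must instead exploit the uniform convexity of $L^p$ ($1<p<\infty$) in a gliding-hump/Brezis--Lieb fashion to produce the weakly null concentrating sequence directly; this is precisely the adaptation of the $p=1$ arguments of~\cite{Dan10} that the statement alludes to. Finally, (iv) follows by assembling the previous parts: boundedness is immediate from (i), and by (iii) $\norm{i_\mu}_e=\lim_n\norm{i_{\mu'_n}}$. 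For $1/n<\delta$ one has $d\mu'_n=h\,dm|_{(1-1/n,1]}$, and since $h(t)\to a$ one gets $(a-\eps)\norm{i_{m'_n}}^p\le\norm{i_{\mu'_n}}^p\le(a+\eps)\norm{i_{m'_n}}^p$ for $n$ large, where $m'_n=m|_{(1-1/n,1]}$. Applying (iii) to Lebesgue measure and noting that $i_m$ is an isometry of an infinite-dimensional space, so $\norm{i_m}_e=1$, gives $\lim_n\norm{i_{m'_n}}=1$; letting $\eps\to0$ yields $\lim_n\norm{i_{\mu'_n}}^p=a$, that is $\norm{i_\mu}_e=a^{1/p}$.
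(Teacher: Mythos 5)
The paper gives no proof of this proposition; it only points to the $p=1$ arguments of~\cite{Dan10}, so the comparison is with that route. Your parts (i), (ii) and (iv), and the upper bound $\norm{i_\mu}_e\le\lim_n\norm{i_{\mu'_n}}$ in (iii), are correct and essentially the intended argument: the sup-norm estimate $\norm{f}_{L^\infty([0,1-\eps])}\le C_\eps\norm{f}_1\le C_\eps\norm{f}_p$ on compact subsets of $[0,1)$ (for $p=2$ this is exactly the estimate $|f(x)|\le\psi(x)\norm{f}_2$ of Section~\ref{se:general embeddings}), combined with the analogous derivative bound and Arzel\`a--Ascoli, gives (i) and (ii); and your derivation of (iv) from (iii) via the sandwich $(a-\eta)\norm{i_{m'_n}}^p\le\norm{i_{\mu'_n}}^p\le(a+\eta)\norm{i_{m'_n}}^p$ together with $\norm{i_m}_e=1$ (an isometry of an infinite-dimensional space is never a compact perturbation away from norm less than one) is exactly right.

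The one genuine gap is the lower bound in (iii). Your argument extracts a weak limit $u$ of the near-optimizers $u_n$ and renormalizes $v_n=u_n-u$; as you yourself note, the control $\limsup_n\norm{v_n}\le 1$ uses the Hilbert space identity and so only works for $p=2$, the case $1<p<2$ is left to an unspecified uniform-convexity argument, and for $p=1$ --- which the statement covers, and where $M^p_\Lambda$ is not reflexive --- the weak compactness you invoke is unavailable. The standard fix, and the argument of~\cite{Dan10} that the paper is deferring to, is to apply compactness to $K$ rather than to the unit ball: choose unit vectors $u_n$ with $\norm{u_n}_{L^p(\mu'_n)}\ge\norm{i_{\mu'_n}}-\eta$, pass to a subsequence along which $Ku_{n_k}\to g$ in $L^p(\mu)$, and estimate
\[
\norm{(i_\mu+K)u_{n_k}}_{L^p(\mu)}\ge\norm{u_{n_k}+g}_{L^p(\mu'_{n_k})}-\norm{Ku_{n_k}-g}_{L^p(\mu)}
\ge\norm{u_{n_k}}_{L^p(\mu'_{n_k})}-\norm{g}_{L^p(\mu'_{n_k})}-o(1),
\]
where $\norm{g}_{L^p(\mu'_{n_k})}^p=\int_{J_{1/n_k}}\abs{g}^p\,d\mu\to 0$ by dominated convergence, since $g\in L^p(\mu)$ and $\mu(\{1\})=0$. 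This gives $\norm{i_\mu+K}\ge L-\eta$ for every compact $K$ and every $\eta>0$, hence $\norm{i_\mu}_e\ge L$, valid for all $p\ge1$ at once and requiring no weak convergence, reflexivity, or renormalization.
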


We will have the occasion to use the following elementary lemma.

\begin{lem}{(\cite{Dan10}, Lemma 2.2)}\label{le:elementary inequality}
Suppose $\rho:\Real_+\To\Real_+$ is an increasing, $\mathcal{C}^1$ function with $\rho(0)=0$ such that
$\mu(J_\eps)\leq\rho(\eps)$ for all $\eps\in(0,1]$. Then for any continuous, positive, increasing function
$g$ we have $$\int_{[0,1]}gd\mu\leq\int^1_0g(x)\rho'(1-x)dx.$$
\end{lem}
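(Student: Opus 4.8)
The plan is to reduce the statement to a monotone inequality and settle it with a layer-cake (Tonelli) computation. First I would remove the distinguished point $1$ by the substitution $x=1-t$: let $\nu$ be the pushforward of $\mu$ under $x\mapsto 1-x$, a finite positive measure on $[0,1]$ satisfying $\nu([0,\eps])=\mu(J_\eps)\le\rho(\eps)$ for every $\eps\in(0,1]$, and set $h(t)=g(1-t)$. Then $h$ is continuous, positive, and \emph{decreasing} on $[0,1]$, and, after the same change of variables on the right-hand side, the asserted inequality becomes $\int_{[0,1]} h\,d\nu\le\int_0^1 h(t)\rho'(t)\,dt$.

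Second, I would exploit the monotonicity of $h$ through the layer-cake formula $h(t)=\int_0^\infty \mathbf{1}_{\{h>s\}}(t)\,ds$ together with Tonelli's theorem. Because $h$ is decreasing, each level set $\{t\in[0,1]:h(t)>s\}$ is an initial interval $[0,\tau(s))$ (possibly empty), where $\tau(s)=\sup\{t:h(t)>s\}\in[0,1]$; the map $s\mapsto\tau(s)$ is decreasing, hence measurable. Applying Tonelli to the left-hand side gives
\[
\int_{[0,1]} h\,d\nu=\int_0^\infty \nu\big([0,\tau(s))\big)\,ds\le\int_0^\infty \rho(\tau(s))\,ds,
\]
where the inequality uses the hypothesis $\nu([0,\eps])\le\rho(\eps)$.

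Third, I would compute the right-hand side by the same device: Tonelli yields $\int_0^1 h(t)\rho'(t)\,dt=\int_0^\infty\big(\int_{[0,\tau(s))}\rho'(t)\,dt\big)\,ds$, and since $\rho$ is $\mathcal{C}^1$ with $\rho(0)=0$ the fundamental theorem of calculus gives $\int_{[0,\tau(s))}\rho'=\rho(\tau(s))$. Thus the right-hand side equals $\int_0^\infty\rho(\tau(s))\,ds$, exactly the bound obtained for the left-hand side, which closes the argument.

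The only delicate points are bookkeeping ones rather than genuine obstacles: one must verify that the level sets really are initial intervals (immediate from $h$ decreasing) and that no regularity of $h$ beyond continuity is needed. This is precisely why the layer-cake representation is preferable to a direct Stieltjes integration by parts, which would otherwise force one to differentiate $h$.
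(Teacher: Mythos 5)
Your argument is correct. Note first that the paper does not actually prove this lemma: it is quoted verbatim from \cite{Dan10}, Lemma 2.2, so there is no in-paper proof to match against. Your layer-cake/Tonelli route is a legitimate and fully rigorous alternative to the standard argument, which proceeds by writing $\int_{[0,1]}g\,d\mu$ as a Riemann--Stieltjes integral $\int_0^1 g(1-\eps)\,dF(\eps)$ with $F(\eps)=\mu(J_\eps)$, integrating by parts, and using $F\le\rho$ together with the monotonicity of $\eps\mapsto g(1-\eps)$ and $\rho'\ge 0$. What your version buys is exactly what you note at the end: no Stieltjes integration by parts, no regularity of the distribution function $F$, and no differentiation of $g$; everything reduces to computing $\int_0^\infty\rho(\tau(s))\,ds$ twice. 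The bookkeeping is handled correctly: for $\tau(s)<1$ the superlevel set is $[0,\tau(s))\subseteq[0,\tau(s)]$ and the hypothesis applies; when $h(1)>s$ the set is all of $[0,1]$ and $\nu([0,1])=\mu(J_1)\le\rho(1)$ still gives the bound; and the empty case contributes zero to both sides. (One may also observe that the hypothesis forces $\mu(\{1\})\le\lim_{\eps\to0}\rho(\eps)=0$, so the endpoint causes no trouble.)
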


A sequence $\Lambda$ is \emph{lacunary} if for some $\gamma>1$ we have $\lambda_{n+1}/\lambda_n\geq \gamma, \ n\geq 1$.
The main feature of lacunarity is that the monomials $\lambda_n^{1/p}x^{\lambda_n}$ form a Riesz basis in each of the spaces $M^p_\Lambda$. In particular, the sequence $( \lambda_n^{1/2} x^{\lambda_n})_{n\ge1}$ forms a Riesz basis in $M^2_\Lambda$.

A more general class of sequences is defined as follows. The sequence $\Lambda$ is called  \emph{quasilacunary} if for some increasing sequence $\{n_k\}$ of
integers with $N:=\sup_k(n_{k+1}-n_k)<\infty$ and some $\gamma>1$ we have $\lambda_{n_{k+1}}/\lambda_{n_k}\geq \gamma$. It is easy to show that any quasilacunary sequence may be enlarged to one that is still quasilacunary and satisfies $\lambda_{n+1}/\lambda_n\leq \gamma^2$.  The main property of quasilacunary sequences is contained in the following lemma.

\begin{lem}{(\cite{Gur05}, Theorem 9.3.3)}\label{le:quasilacunary}
If $\Lambda$ is quasilacunary and $F_k=$span
$\{x^{\lambda_{n_k+1}},\ldots,\break x^{\lambda_{n_{k+1}}}\}$, then
$\exists$ $d_1,d_2>0$ such that for any sequence of functions
$f_k\in F_k$ we have
$$d_1(\sum_k\norm{f_k}^2_2)\leq\norm{\sum_k f_k}_2^2\leq
d_2(\sum_k\norm{f_k}^2_2).$$
\end{lem}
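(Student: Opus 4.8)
The plan is to reduce the statement to an almost-orthogonality estimate between the blocks $F_k$, made transparent by a change of variable that turns the Gram matrix of the monomials into a Cauchy-type kernel. Writing $\mu_n=\lambda_n+\tfrac12$ and $t_n=\log\mu_n$, the normalized monomials $e_n=\sqrt{2\mu_n}\,x^{\lambda_n}$ satisfy
\[
\langle e_n,e_m\rangle=\frac{2\sqrt{\mu_n\mu_m}}{\mu_n+\mu_m}=\frac{1}{\cosh\!\big(\tfrac12(t_n-t_m)\big)},
\]
so the off-diagonal entries decay exponentially in $\abs{t_n-t_m}$. Quasilacunarity, for a blocking in which consecutive blocks are separated by a genuine lacunary gap (which the hypothesis permits, after choosing the anchors at the natural cluster boundaries), guarantees that the clusters $T_k=\{t_j:n_k<j\le n_{k+1}\}$ have bounded diameter and that consecutive clusters are separated in $t$ by at least some $\beta>0$; hence for $k\ne l$ the distance between $T_k$ and $T_l$ is at least $(\abs{k-l}-1)\beta$.

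Writing $f_k=\sum_j c_j e_j\in F_k$ and $f_l=\sum_i d_i e_i\in F_l$, I would first control the off-diagonal Gram block $G_{kl}=(\langle e_j,e_i\rangle)_{j\in B_k,\,i\in B_l}$, where $B_k=\{n_k+1,\dots,n_{k+1}\}$. Using $1/\cosh(s)\le 2e^{-\abs{s}}$ and that each block has at most $N$ indices, the Hilbert--Schmidt norm satisfies $\norm{G_{kl}}_{\sh_2}\le 2N\,e^{-c\abs{k-l}}$ for a suitable $c>0$ depending on $\beta$. Since $\norm{f_k}^2=\langle G_k c,c\rangle$ with $G_k$ the positive definite diagonal block, the quantity that actually governs the interaction is the normalized norm $\norm{G_k^{-1/2}G_{kl}G_l^{-1/2}}$, which equals the cosine of the minimal angle between $F_k$ and $F_l$. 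The central estimate to prove is that this cosine decays geometrically:
\[
\cos\theta_{kl}:=\sup\{\,\abs{\langle u,v\rangle}:u\in F_k,\ v\in F_l,\ \norm{u}=\norm{v}=1\,\}\le C\,e^{-c\abs{k-l}}.
\]
Call this estimate $(\star)$.

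Granting $(\star)$, both inequalities follow from a Schur-test argument. Expanding $\norm{\sum_k f_k}^2=\sum_k\norm{f_k}^2+\sum_{k\ne l}\langle f_k,f_l\rangle$ and applying $(\star)$ gives $\abs{\sum_{k\ne l}\langle f_k,f_l\rangle}\le C\big(\sum_{m\ne 0}e^{-c\abs{m}}\big)\sum_k\norm{f_k}^2=:S\sum_k\norm{f_k}^2$, which yields the upper bound with $d_2=1+S$. For the lower bound one needs the off-diagonal mass to be strictly dominated by the diagonal; I would arrange this by choosing the blocking so that the separation constant $\beta$ (equivalently the lacunary ratio $\gamma$) forces $S<1$, treating the finitely many initial blocks where the separation is weakest as a finite-dimensional perturbation. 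Diagonal dominance of the matrix of normalized block-vectors then gives $\norm{\sum_k f_k}^2\ge(1-S)\sum_k\norm{f_k}^2$.

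The main obstacle is $(\star)$ itself. The difficulty is that within a single block the monomials may be nearly linearly dependent, so one cannot bound the coefficients $c_j$ by $\norm{f_k}$; the geometric smallness must instead be extracted from the weighted operator norm $\norm{G_k^{-1/2}G_{kl}G_l^{-1/2}}$, in which the near-degeneracy of $G_k$ and $G_l$ has to be shown to cancel against the corresponding near-degeneracy of $G_{kl}$. I expect to handle this through the explicit $1/\cosh$ structure: the smoothness of the kernel in the $t$-variable means that a difference $e_j-e_{j'}$ with $t_j\approx t_{j'}$ produces a correspondingly small row of $G_{kl}$, so that the rank-one bound $\langle e_j,e_i\rangle\le 2e^{t_j/2}e^{-t_i/2}$ together with the cluster separation controls the normalized block norm uniformly. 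Making this cancellation rigorous --- rather than using the crude product $\norm{G_k^{-1/2}}\,\norm{G_{kl}}\,\norm{G_l^{-1/2}}$, which is useless when the blocks are ill-conditioned --- is the heart of the argument, and is precisely where both the bounded block size $N$ and the genuine lacunary separation between consecutive blocks must be used.
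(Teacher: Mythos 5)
The paper offers no proof of this lemma: it is quoted from \cite{Gur05} (Theorem 9.3.3), so there is no in-paper argument to compare yours against, and your attempt must be judged on its own terms. On those terms it is a strategy rather than a proof. The opening computation is correct and well chosen --- with $\mu_n=\lambda_n+\tfrac12$ and $e_n=\sqrt{2\mu_n}\,x^{\lambda_n}$ one does get $\langle e_n,e_m\rangle=1/\cosh\bigl(\tfrac12(t_n-t_m)\bigr)$ --- and, \emph{granting} your estimate $(\star)$ with a summable off-diagonal mass $S<1$, the diagonal-dominance step does deliver both inequalities. But $(\star)$ is the entire content of the lemma: the assertion that the subspaces $F_k$ sit at uniformly positive, summably decaying angles is exactly what makes them a Riesz decomposition, and you leave it unproved, correctly diagnosing that the crude bound $\|G_k^{-1/2}\|\,\|G_{kl}\|\,\|G_l^{-1/2}\|$ is useless when the within-block Gramians are ill-conditioned. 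Identifying the obstruction is not the same as overcoming it.

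Moreover, two structural facts you claim to extract from quasilacunarity are not available from the definition as given. Quasilacunarity bounds the \emph{cardinality} of each block by $N$ and makes the anchors $\lambda_{n_k}$ lacunary; it gives neither an upper bound on $\lambda_{n_{k+1}}/\lambda_{n_k+1}$ (so the clusters $T_k$ need not have bounded diameter) nor a lower bound $\gamma'>1$ on $\lambda_{n_{k+1}+1}/\lambda_{n_{k+1}}$ (so consecutive clusters need not be separated by any $\beta>0$). The second failure is fatal if the blocking is taken to be an arbitrary one witnessing quasilacunarity: let $\lambda_{2k}=2^k$ and $\lambda_{2k+1}=2^k(1+\eps_k)$ with $\eps_k\downarrow 0$, and take $n_k=2k$, so the anchors satisfy $\lambda_{n_{k+1}}/\lambda_{n_k}=2$ and $F_k=\mathrm{span}\{x^{2^k(1+\eps_k)},x^{2^{k+1}}\}$. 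The normalizations $u_k$ of $x^{2^{k+1}}\in F_k$ and $v_k$ of $x^{2^{k+1}(1+\eps_{k+1})}\in F_{k+1}$ satisfy $\langle u_k,v_k\rangle\to 1$, hence $\|u_k-v_k\|_2^2\to 0$ while $\|u_k\|_2^2+\|v_k\|_2^2=2$, and no $d_1>0$ can work. So the statement must be read for a suitably normalized blocking, and producing that normalization --- together with the argument that the estimate for the normalized blocking transfers back to the original spaces $F_k$ --- is a necessary part of any proof. Your proposed remedy of enlarging $\gamma$ by merging blocks cannot supply it, since merging never separates the new adjacent blocks, and the near-tangency above occurs at arbitrarily large $k$, not only among finitely many initial blocks. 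In short: the kernel computation is the right first step, but the decisive estimate is missing and one of the structural assumptions on which the rest of the argument leans is unjustified as stated.
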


\section{Embeddings in $M_\Lambda^2$}\label{se:general embeddings}

Most of our results pertain to the Hilbert space $M_\Lambda^2$. In particular, in this section we will consider only $p=2$, and therefore we will drop the index $p$ and write ``$\Lambda$-embedding'' and ``$i_\mu$''. On the other hand, we will complicate things slightly by introducing  $M^2_{\Lambda,a}$ as the closure of the same monomials $x^{\lambda_n}$ in $L^2([0,a])$; so $M^2_\Lambda= M^2_{\Lambda,1}$.

It is known (see [B-E], pp. 177--178) that the condition $\sum_n
1/\lambda_n<\infty$ ensures that the system $x^{\lambda_n}$ is minimal in
$M^2_\Lambda$, and that, if $d_n$ is the distance from $x^{\lambda_n}$ to the
linear space  spanned by $x^{\lambda_m}$ with $m\not=n$,  then
$d_n=e^{-\gamma_n\lambda_n}$, with $\gamma_n\to 0$.
Let us denote
\begin{equation}\label{eq:definition of psi}
\psi(x)=\sum_{n\ge 1}d_n^{-1} x^{\lambda_n}.
\end{equation}
The remarks above show that the sum is convergent for any $x<1$, and defines an increasing function of $x$.

A simple argument of Hilbert space (also reproduced in [B-E], pp. 177--178) says that, if $p=\sum_i \alpha_i x^{\lambda_i}$, then
\[
|\alpha_n|\le d_n^{-1} \|p\|_2.
\]
It follows then that, for any $f\in M^2_\Lambda$, we have the estimate
\begin{equation}\label{eq:basic estimate}
|f^{(k)}(x)|\le \psi^{(k)}(x)\|f\|_2,\qquad k=0,1,\dots
\end{equation}
($f^{(k)}$ denoting, as usual, the $k$th derivative of $f$).

Consider now $0<a<1$. If $d_n(a)$ is the distance in $M^2_{\Lambda,a}$ from $x^{\lambda_n}$ to the space spanned by $x^{\lambda_m}$ with $m\not=n$, then
\[
d_n(a)^2=\inf_{p\in \hat{\mathcal{P}}_n} \int_0^a |x^{\lambda_n}-p(x)|^2\, dx=
\inf_{p\in \hat{\mathcal{P}}_n} \int_0^1 |a^{2\lambda_n} t^{\lambda_n}-p(t)|^2 a\, dt
=a^{2\lambda_n+1} d_n,
\]
whence
\[
\psi_a(x) :=\sum_{n\ge 1}d_n(a)^{-1} x^{\lambda_n}=a^{-1/2}\psi(a^{-1}x).
\]
We have thus the estimate, for functions in $M^2_{\Lambda,a}$,
\begin{equation}\label{eq:basic estimate a}
|f(x)|\le a^{-1/2}\psi(a^{-1}x) \|f\|_2.
\end{equation}
In particular, if $a=1$, we recapture~\eqref{eq:basic estimate} for $k=1$.

Although the function $\psi$ is a rather rough indicator of the properties of the sequences $\Lambda$,  it is useful in obtaining sufficient conditions for embedding results. A first example is an  analogue for $M^2_\Lambda$ of~\cite[Theorem~2.6]{Dan10}.

\begin{thm}\label{th:boundedness psi}
If $\psi\in L^2(\mu)$, then $\mu$ is $\Lambda$-embedding and $\|i_\mu \|\le\|\psi\|_{L^2(\mu)}$.
\end{thm}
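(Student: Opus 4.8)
The plan is to reduce everything to the pointwise estimate~\eqref{eq:basic estimate} with $k=0$, namely $|f(x)|\le \psi(x)\|f\|_2$, valid for every $f\in M^2_\Lambda$ and every $x\in[0,1)$. Once this is in hand, the statement becomes a one-line integration argument: for a M\"untz polynomial $g=\sum_k\alpha_k x^{\lambda_k}$ one writes
\begin{equation*}
\|g\|_{L^2(\mu)}^2=\int_{[0,1]}|g(x)|^2\,d\mu(x)\le \int_{[0,1]}\psi(x)^2\,d\mu(x)\;\|g\|_2^2=\|\psi\|_{L^2(\mu)}^2\,\|g\|_2^2,
\end{equation*}
so that $\|g\|_{L^2(\mu)}\le\|\psi\|_{L^2(\mu)}\|g\|_2$. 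This shows at once that $\mu$ is $\Lambda$-embedding with embedding constant $\|\psi\|_{L^2(\mu)}$, and, since $i_\mu$ is obtained by extending by continuity from the dense set of M\"untz polynomials, that $\|i_\mu\|\le\|\psi\|_{L^2(\mu)}$.

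The one point requiring care is the behaviour at the endpoint $x=1$, where~\eqref{eq:basic estimate} is not directly asserted. I would dispose of it as follows. If $\psi(1):=\lim_{x\to1^-}\psi(x)=\sum_n d_n^{-1}$ is finite, then, since a M\"untz polynomial is continuous on $[0,1]$ and $\psi$ is the increasing limit of its partial sums, the bound $|g(x)|\le\psi(x)\|g\|_2$ extends to $x=1$ by letting $x\to1^-$, and the integration above is valid over all of $[0,1]$. If instead $\psi(1)=\infty$, then the hypothesis $\psi\in L^2(\mu)$ forces $\mu(\{1\})=0$ (otherwise $\int\psi^2\,d\mu\ge\psi(1)^2\mu(\{1\})=\infty$), so the integrals over $[0,1]$ and over $[0,1)$ coincide and the pointwise bound is available throughout the region that carries mass. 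In either case the displayed chain of inequalities goes through.

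I do not expect a genuine obstacle here: the substance of the result is entirely contained in~\eqref{eq:basic estimate}, whose validity rests on the minimality estimate $|\alpha_n|\le d_n^{-1}\|p\|_2$ recalled earlier. The only remaining work is the endpoint bookkeeping just described and the routine passage from M\"untz polynomials to all of $M^2_\Lambda$ by density, both of which are straightforward.
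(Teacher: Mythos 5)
Your proposal is correct and follows exactly the paper's argument: integrate the pointwise bound $|f(x)|\le\psi(x)\|f\|_2$ (the $k=0$ case of~\eqref{eq:basic estimate}) against $\mu$ and pass to all of $M^2_\Lambda$ by density. The extra bookkeeping you do at the endpoint $x=1$ is harmless but not needed in the paper, which has already noted that any $\Lambda$-embedding measure satisfies $\mu(\{1\})=0$ and works throughout on $[0,1)$ where the pointwise estimate holds.
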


\begin{proof}
The proof consists in integrating with respect to $\mu$
 the relation $|f(x)|\le \psi(x)\|f\|_2$, which, as noted above, is the case
$k=1$ of~\eqref{eq:basic estimate}.
\end{proof}

We obtain then the analogue for $M^2_\Lambda$ of~\cite[Corollary 2.7]{Dan10}.

\begin{cor}\label{co:boundedness rho}
Suppose $\rho:\mathbb{R}_+\to \mathbb{R}_+$ is an increasing $C^1$ function with $\rho(0)=0$, such that $\int_0^1 (\psi(x))^2\rho'(1-x)\, dx<\infty$. If $\mu(J_\epsilon)\le\rho(\epsilon)$ for all $\epsilon\in (0,1]$, then $\mu$ is $\Lambda$-embedding.
\end{cor}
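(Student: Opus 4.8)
The plan is to combine Theorem~\ref{th:boundedness psi} with the elementary integration Lemma~\ref{le:elementary inequality}. By Theorem~\ref{th:boundedness psi}, it suffices to verify that $\psi\in L^2(\mu)$, i.e. that $\int_{[0,1]}\psi^2\,d\mu<\infty$. The hypothesis $\mu(J_\eps)\le\rho(\eps)$ is exactly the condition needed to apply Lemma~\ref{le:elementary inequality}, and the conclusion $\int_0^1(\psi(x))^2\rho'(1-x)\,dx<\infty$ is precisely the right-hand side of that lemma with the integrand $g=\psi^2$.

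First I would check that $\psi^2$ satisfies the hypotheses required of the function $g$ in Lemma~\ref{le:elementary inequality}, namely that it be continuous, positive, and increasing on $[0,1)$. Indeed, $\psi(x)=\sum_{n\ge1}d_n^{-1}x^{\lambda_n}$ is a sum of nonnegative increasing continuous functions on $[0,1)$, converging for every $x<1$ by the remarks following~\eqref{eq:definition of psi}; hence $\psi$ itself is continuous, positive (the $\lambda_n$ are positive so each term is positive for $x>0$), and increasing, and therefore so is $\psi^2$. Since $\rho$ is an increasing $C^1$ function with $\rho(0)=0$, Lemma~\ref{le:elementary inequality} applies and yields
\[
\int_{[0,1]}\psi^2\,d\mu\le\int_0^1\psi(x)^2\rho'(1-x)\,dx.
\]

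The right-hand side is finite by hypothesis, so $\psi\in L^2(\mu)$, and Theorem~\ref{th:boundedness psi} gives that $\mu$ is $\Lambda$-embedding.

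The only point requiring care is the behavior at the endpoint $x=1$, where $\psi$ need not be bounded: Lemma~\ref{le:elementary inequality} is stated for continuous functions on $[0,1]$, whereas $\psi$ is only continuous on $[0,1)$. I expect this to be the main (minor) obstacle. The way to handle it is to observe that a $\Lambda$-embedding measure satisfies $\mu(\{1\})=0$, so the single point carries no mass; alternatively one applies the lemma to the truncated measures $\mu|_{[0,a]}$ for $a<1$ (on which $\psi$ is continuous and bounded) to obtain $\int_{[0,a]}\psi^2\,d\mu\le\int_0^1\psi^2(x)\rho'(1-x)\,dx$ with a bound independent of $a$, and then let $a\to1$ by monotone convergence. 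This secures the finiteness of $\int_{[0,1]}\psi^2\,d\mu$ and completes the proof.
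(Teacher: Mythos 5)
Your proof is correct and follows exactly the paper's own argument: apply Lemma~\ref{le:elementary inequality} with $g=\psi^2$ to conclude $\psi\in L^2(\mu)$, then invoke Theorem~\ref{th:boundedness psi}. The extra care you take about the endpoint $x=1$ is a reasonable refinement but does not change the route.
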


\begin{proof}
Similar to the $L^1$ case, the proof follows from Theorem~\ref{th:boundedness psi} by applying Lemma~\ref{le:elementary inequality} to $g=\psi^2$.
\end{proof}

More interesting, we may improve Proposition~\ref{pr:revision} (ii): if the support of $\mu$ is compact in $[0,1)$, then the embedding is not only compact, but inside any Schatten--von Neumann class.

\begin{thm}\label{th:c_p embedding compact support}
If  $\supp\mu\subset[0,1-\eps]$, then $i_\mu\in \sh_q$ for any $q>0$.

\end{thm}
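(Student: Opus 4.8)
The plan is to show that the singular values of $i_\mu$ decay faster than any power, which gives membership in $\sh_q$ for all $q>0$. The key observation is that when $\supp\mu\subset[0,1-\eps]$, the pointwise estimate~\eqref{eq:basic estimate} becomes very strong: for $f\in M^2_\Lambda$ and $x\le 1-\eps$, we have $|f(x)|\le\psi(x)\|f\|_2\le\psi(1-\eps)\|f\|_2$, and more importantly the function $\psi$ itself decays geometrically in its coefficients. I intend to exploit Corollary~\ref{co:Schatten von Neumann}(ii), which says that for $0<q\le 2$,
\[
\|i_\mu\|_{\sh_q}^q\le\sum_n\|i_\mu x_n\|_{L^2(\mu)}^q
\]
for any Riesz basis $(x_n)$ of $M^2_\Lambda$. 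Since $\sh_q\subset\sh_{q'}$ for $q\le q'$, it suffices to prove $i_\mu\in\sh_q$ for all small $q$, so I may restrict to $0<q\le 2$ and use this corollary directly.

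First I would choose a convenient Riesz basis. If $\Lambda$ is lacunary, the normalized monomials $x_n=\lambda_n^{1/2}x^{\lambda_n}$ form a Riesz basis of $M^2_\Lambda$, and then
\[
\|i_\mu x_n\|_{L^2(\mu)}^2=\lambda_n\int_{[0,1-\eps]}x^{2\lambda_n}\,d\mu(x)\le\lambda_n(1-\eps)^{2\lambda_n}\mu([0,1-\eps]).
\]
Since $(1-\eps)^{2\lambda_n}$ decays geometrically while $\lambda_n$ grows only polynomially (indeed the $\lambda_n$ increase, but the exponential wins), the sequence $\|i_\mu x_n\|_{L^2(\mu)}$ decays geometrically, so $\sum_n\|i_\mu x_n\|^q<\infty$ for every $q>0$. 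For a general (non-lacunary) sequence $\Lambda$ I would instead invoke the estimate~\eqref{eq:basic estimate} together with the structure of $\psi$: writing any orthonormal basis $(\phi_n)$ of $M^2_\Lambda$, I would bound $\|i_\mu\phi_n\|_{L^2(\mu)}$ using that each $\phi_n$ is supported spectrally at high frequencies once $n$ is large, so that $|\phi_n(x)|$ for $x\le 1-\eps$ is controlled by the tail of $\psi$, which decays arbitrarily fast.

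The cleanest route, avoiding any lacunarity hypothesis, is to factor through restriction. The operator $i_\mu$ factors as $M^2_\Lambda\hookrightarrow C([0,1-\eps])\to L^2(\mu)$, where the second map is bounded (it is just restriction of continuous functions, bounded since $\mu$ is finite) and the first map is Hilbert--Schmidt — in fact in every $\sh_q$ — because of the rapid coefficient decay encoded in $\psi$. Concretely, using the Hilbert--Schmidt norm formula with the orthonormal basis obtained from Gram--Schmidt on $(x^{\lambda_n})$, one computes $\sum_n\|i_\mu\phi_n\|^2_{L^2(\mu)}=\int_{[0,1-\eps]}K(x,x)\,d\mu(x)$ where $K$ is the reproducing kernel of $M^2_\Lambda$; and~\eqref{eq:basic estimate} gives $K(x,x)\le\psi(x)^2\le\psi(1-\eps)^2<\infty$ on $\supp\mu$, so $i_\mu$ is already Hilbert--Schmidt. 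To upgrade from $\sh_2$ to all $\sh_q$, I would iterate: the embedding $M^2_{\Lambda,1}\hookrightarrow L^2(\mu)$ splits as a composition of embeddings into intermediate spaces $M^2_{\Lambda,a}$ with $1-\eps<a<1$, each Hilbert--Schmidt by the same kernel estimate using~\eqref{eq:basic estimate a}, and then apply the multiplicativity of Schatten norms from Lemma~\ref{le:Schatten-vonNeumann}(i): a product of $k$ Hilbert--Schmidt operators lies in $\sh_{2/k}$. Since $k$ is arbitrary, $i_\mu\in\sh_q$ for all $q>0$.

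The main obstacle I anticipate is the factorization step: I must produce genuinely distinct intermediate embedding operators that compose to $i_\mu$ and are each Hilbert--Schmidt, which requires comparing the norms of $M^2_{\Lambda,a}$ for different $a$ and verifying that restriction $M^2_{\Lambda,a}\to M^2_{\Lambda,a'}$ for $a'<a$ is well-defined and Hilbert--Schmidt via the kernel bound from~\eqref{eq:basic estimate a}. The subtlety is that these are not literally nested Hilbert spaces with the same inner product, so the "restriction" maps and their adjoints must be handled carefully; the estimate~\eqref{eq:basic estimate a}, giving $|f(x)|\le a^{-1/2}\psi(a^{-1}x)\|f\|_2$ on $[0,a]$, is exactly what is needed to bound each kernel on the diagonal and make the iteration go through.
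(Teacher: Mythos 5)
Your third route (the ``cleanest'' one) is correct and proves the theorem; your first two sketches are either restricted to lacunary $\Lambda$ or too vague (the ``spectrally supported at high frequencies'' remark would not survive scrutiny), but you rightly discard them. The working argument shares its skeleton with the paper's proof --- both factor $i_\mu$ through restriction to a smaller interval and then invoke the multiplicativity of Schatten norms from Lemma~\ref{le:Schatten-vonNeumann}(i) to drive the index down to $2/k$ --- but the mechanism by which the $\sh_{2/k}$ factor is produced is genuinely different. The paper writes the restriction $M^2_\Lambda\to M^2_{\Lambda,b'}$ as $J_k\circ D_k$, where $D_k$ is $k$-fold differentiation (bounded by the estimate $|f^{(k)}(x)|\le\psi^{(k)}(x)\|f\|_2$) and $J_k$ is $k$-fold integration, which lies in $\sh_{2/k}$ because the Volterra operator is Hilbert--Schmidt; this requires controlling all the derivatives $\psi^{(k)}$. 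You instead chain $k$ restriction maps $M^2_{\Lambda,a_{j-1}}\to M^2_{\Lambda,a_j}$ along a decreasing sequence $1=a_0>a_1>\dots>a_k>1-\eps$, and show each single restriction is Hilbert--Schmidt via the reproducing-kernel trace identity $\sum_n\|R\phi_n\|^2=\int K_{a_{j-1}}(x,x)\,dx$ together with the diagonal bound $K_a(x,x)\le a^{-1}\psi(a^{-1}x)^2$ coming from~\eqref{eq:basic estimate a}. This buys you an argument that uses only the function $\psi$ itself (no higher derivatives, no integration operator), at the cost of having to set up $k$ distinct intermediate M\"untz spaces and verify that the restrictions compose to $i_\mu$ --- which is immediate on M\"untz polynomials and extends by density and boundedness, exactly the point you flag as the remaining care needed. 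One cosmetic remark: the intermediate factorization through $C([0,1-\eps])$ that you mention is not what you actually use; what carries the argument is the trace identity for the map into $L^2$ of the next interval, and that part is sound.
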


\begin{proof}
Denote $b=1-\epsilon$.
We have $\int \psi(x)^2\,d\mu(x)\le  \psi(b)^2 \|\mu\|$, and thus, by Theorem~\ref{th:boundedness psi},
\begin{equation}\label{eq:boundedness2}
\|i_\mu\|\le \psi(b)\sqrt{\|\mu\|}.
\end{equation}

Let us fix a positive integer~$k$ and a number $b<b'<1$. If $f\in M^2_\Lambda$, then, by~\eqref{eq:basic estimate}, we have
\[
\int_0^{b'} |f^{(k)}(x)|^2 \, dx\le (\psi^{(k)}(b'))^2 \|f\|^2
\]
and thus the $k$ times differentiation operator $D_k$ is bounded from $M^2_\Lambda$ to $L^2([0,b'])$, of norm at most $\psi^{(k)}(b')$.

On the other hand, integration is a Hilbert--Schmidt operator on $L^2([0,b'])$, of $\sh_2$ norm $b'/\sqrt{2}\le 2^{-1/2}$. It follows by Lemma~\ref{le:Schatten-vonNeumann} (i) that
$k$ times integration is an operator $J_k$ in $\sh_{2/k}$ on $L^2([0,b'])$, of norm at most $2^{-k/2}$. The composition $R_k=J_k\circ D_k$ is  the restriction of functions in $M^2_\Lambda$ to $[0,b']$; it is thus (again by Lemma~\ref{le:Schatten-vonNeumann} (i)) an operator  of class $\sh_{2/k}$ from $M^2_\Lambda$ to $L^2([0,b'])$, whose image is in $M^2_{\Lambda, b'}$, and
\[
\|R_k\|_{2/k}\le  2^{-k/2} \psi^{(k)}(b') .
\]

Consider then the embedding $i'_\mu$ from $M^2_{\Lambda, b'}$ into $L^2(\mu)$. According to~\eqref{eq:basic estimate a} and~\eqref{eq:boundedness2}, $i'_\mu$ is bounded and
\[
\|i'_\mu\|\le \psi_{b'}(b) \sqrt{\|\mu\|} = {b'}^{-1/2}\psi (b/b')\sqrt{\|\mu\|}.
\]
Finally, $i_\mu=i'_\mu R_k$, and thus
\begin{equation}\label{eq:estimate c_p}
\|i_\mu\|_{2/k}\le  2^{-k/2}  {b'}^{-1/2}\psi^{(k)}(b')\psi (b/b')\sqrt{\|\mu\|}.
\end{equation}
Choosing $k$ such that $2/k\le q$, inequality~\eqref{eq:estimate c_p} proves the theorem for any $q>0$.
\end{proof}

If $\mu$ is a general measure, Theorem~\ref{th:c_p embedding compact support} can still be used in order to obtain sufficient conditions for the embedding to be in~$\sh_2$. Namely, we  take a sequence $b_n\nearrow 1$ and define $\mu_j=\mu|[b_j, b_{j+1})$; then $i^2_\mu=\sum_j i^2_{\mu_j}$, and thus we have (for $q\ge 1$) $\|i^2_\mu\|_q\le \|\sum_j i^2_{\mu_j}\|_q$. We may then apply Theorem~\ref{th:c_p embedding compact support} to each of the measures $\mu_j$. The statements obtained  depend on the arbitrary sequence $(b_j)$, and are thus not very natural. We prefer to state a more elegant result, valid for Hilbert--Schmidt embeddings.

\begin{thm}
Define $\Psi(x)=\psi'(x^{1/4})\psi(x^{1/4})$. If $\Psi\in L^2(\mu)$, then $i_\mu\in \sh_{2}$.
\end{thm}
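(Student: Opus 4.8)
The plan is to obtain the Hilbert--Schmidt condition by partitioning $\mu$ near the point $1$ and applying the compact-support estimate~\eqref{eq:estimate c_p} to each piece, then summing. Concretely, I would choose a sequence $b_j \nearrow 1$ and set $\mu_j = \mu|_{[b_j,b_{j+1})}$, so that $i_\mu = \sum_j i_{\mu_j}$ with the summands having pairwise essentially disjoint supports. Since we only need membership in $\sh_2$, I would use the triangle inequality for the $\sh_2$ norm together with the near-orthogonality of the ranges: because each $i_{\mu_j}$ lands in the mutually ``almost orthogonal'' subspaces $L^2(\mu_j)$ of $L^2(\mu)$, the Hilbert--Schmidt norms add in square, giving $\|i_\mu\|_{\sh_2}^2 \le \sum_j \|i_{\mu_j}\|_{\sh_2}^2$ (the supports being genuinely disjoint, this is an exact identity). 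So the real task is to estimate each $\|i_{\mu_j}\|_{\sh_2}^2$ and sum.

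For each block, I would apply Theorem~\ref{th:c_p embedding compact support} with $q=2$, i.e.\ take $k=1$ in~\eqref{eq:estimate c_p}. With $\supp\mu_j \subset [0,b_{j+1}]$ I would put $b=b_{j+1}$ and choose the auxiliary point $b'$ to be the next breakpoint, so that the factors $\psi^{(1)}(b')$ and $\psi(b/b')$ in~\eqref{eq:estimate c_p} are controlled by $\psi'$ and $\psi$ evaluated at points comparable to $b_{j+1}$. The key is to pick the spacing of the $b_j$ so that, upon squaring~\eqref{eq:estimate c_p} and summing over $j$, the resulting series is dominated by $\int \psi'(\cdot)^2\psi(\cdot)^2\, d\mu$ after a change of variable. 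The substitution $x \mapsto x^{1/4}$ in the definition $\Psi(x)=\psi'(x^{1/4})\psi(x^{1/4})$ strongly suggests the correct choice is a dyadic-type partition in which $b' = b^{1/2}$ (or $b_{j+1}$ chosen as a fixed power of $b_j$), so that the ratio $b/b'$ stays bounded away from $1$ and the exponent $1/4$ emerges from combining the factor $\psi^{(1)}(b')$ with $\psi(b/b')$ and accounting for the $b'^{-1/2}$ and $\sqrt{\|\mu_j\|}$ weights.

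The main obstacle I anticipate is the bookkeeping that converts the sum $\sum_j \|i_{\mu_j}\|_{\sh_2}^2$ into the single integral $\int \Psi^2\, d\mu$ with the precise exponent $1/4$. One must verify that the choice of $b'$ relative to each block keeps $b/b'$ uniformly bounded below $1$ (so $\psi(b/b')$ does not blow up), while simultaneously ensuring $\psi'(b')$ does not overshoot $\psi'$ evaluated at the correct power of the sampling point of $\mu_j$. Since $\psi$ and $\psi'$ are increasing, the estimate $\|\mu_j\|\,\psi'(b_{j+1})^2\,\psi(b_{j+1}/b')^2 \lesssim \int_{[b_j,b_{j+1})} \psi'(x)^2\psi(x)^2\,d\mu$ should follow by bounding the constant factors by their values at a point of the block and using monotonicity; the $x^{1/4}$ reparametrization is exactly what absorbs the accumulated geometric factors from the $b'^{-1/2}$ and $\psi(b/b')$ terms across the telescoping partition. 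I would therefore conclude that $\|i_\mu\|_{\sh_2}^2 \lesssim \int \Psi^2\, d\mu = \|\Psi\|_{L^2(\mu)}^2 < \infty$, whence $i_\mu \in \sh_2$.
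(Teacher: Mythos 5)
Your plan is essentially the paper's proof: the authors take $b_1=1/2$, $b_{j+1}=\sqrt{b_j}$, set $\mu_j=\mu|[b_j,b_{j+1})$, apply the estimate of Theorem~\ref{th:c_p embedding compact support} with $k=1$, $b=b_{j+1}$, $b'=b_{j+2}$ to each block, and sum using the orthogonal decomposition $L^2(\mu)=\bigoplus_j L^2(\mu_j)$ together with the monotonicity of $\Psi$ (so that $\Psi(b_j)^2\|\mu_j\|\le\int\Psi^2\,d\mu_j$). One correction to your anticipated obstacle: with this choice $b/b'=b_{j+1}/b_{j+2}=\sqrt{b_{j+1}}=b_{j+2}=b_j^{1/4}\to 1$, so $\psi(b/b')$ is \emph{not} kept bounded (no choice with $b/b'\le c<1$ uniformly is even possible as $b\to1$); rather, $\psi(b/b')=\psi(b_j^{1/4})$ is precisely the second factor of $\Psi(b_j)=\psi'(b_j^{1/4})\psi(b_j^{1/4})$, and its growth is exactly what the hypothesis $\Psi\in L^2(\mu)$ is there to control.
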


\begin{proof}
Consider the sequence $b_n$ defined by $b_0=0$, $b_1=1/2$, and $b_{j+1}=\sqrt{b_j}$; obviously it is an increasing sequence tending to~1. Define also, for $j\ge 0$, $\mu_j=\mu|[b_j, b_{j+1})$; we have
\begin{equation}\label{eq:direct sum mu}
L^2(\mu)=\bigoplus_{j=0}^\infty L^2(\mu_j).
\end{equation}
We have $\|i_\mu\|_2^2=\sum_{j=0}^\infty \|i_{\mu_j}\|_2^2$. By Theorem~\ref{th:c_p embedding compact support} applied with $b=b_{j+1}$ and $b'=b_{j+2}$, we have, for some constant $C>0$,
\[
\begin{split}
\|i_{\mu_j}\|_2^2&\le C(\psi'(b_{j+2}))^2 \left(
\psi \left(
\frac{b_{j+1}}{b_{j+2}}
\right)\right)^2 \|\mu_j\|
= C(\psi'(b_{j+2}))^2 (
\psi(b_{j+2}))^2 \|\mu_j\|\\&
=C(\Psi(b_j))^2 \|\mu_j\|.
\end{split}
\]
Since $\Psi$ is increasing the last term is less or equal $C\int\Psi(x)^2\, d\mu_j(x)$. Therefore
\[
\|i_\mu\|_2^2=\sum_{j=0}^\infty \|i_{\mu_j}\|_2^2\le  C \sum_{j=0}^\infty
\int\Psi(x)^2\, d\mu_j(x)= \int\Psi(x)^2\, d\mu(x),
\]
which proves the theorem.
\end{proof}

The function $\psi$ is hard to compute precisely, but one can give
estimates that can be used in the above embedding results. Here are some
examples; the estimates for the power series are classical and can be obtained,
for instance, by the techniques in~\cite{Br}.

\begin{enumerate}
\item Suppose $\lambda_n=2^n$ (a typical case of a lacunary sequence). Then $(2^{n/2}x^{2^n})$ forms a Riesz basis of $M^2_\Lambda$, so in~~\eqref{eq:definition of psi} we have $d_n\sim 2^{-n/2}$, and
\[
\psi(x)\sim \sum_{n=1}^\infty 2^{n/2}x^{2^n}\sim \frac{1}{\sqrt{1-x}}.
\]
This estimate shows the fact that the function $\psi$ reflects only partially
the properties of the sequence $\Lambda$. It
does not use the precise fact that $(2^{n/2}x^{2^n})$ is a Riesz basis, but only
that it is a uniformly minimal sequence. For instance, one cannot use it
to recapture the results obtained in Section~\ref{se:sublinear} below.

\item $\lambda_n=n^2$. This is a typical case of what is called a ``standard'' sequence (which is defined by the fact that $\lambda_{n+1}/\lambda_n\to 1$). A computation essentially done in~\cite[Section 7]{Dan10} shows that
\[
\psi(x)\sim e^{\frac{C}{1-x}}
\]
for some constant $C>0$.
\end{enumerate}

\section{Sublinear measures}\label{se:sublinear}

As in~\cite{Dan10}, one can obtain much more precise results if one considers special classes of measures.
In this section we will again consider different values of $p$, so we return to the notations $i_\mu^p$ and $M^p_\Lambda$.

\begin{defn}\label{de:sublinear}
A measure $\mu$ is called \emph{sublinear} if there is a constant $C>0$ such that for any $0<\eps<1$ we
have $\mu(J_\eps)\leq C\eps$. The smallest such $C$ will be denoted by $\norm{\mu}_S$. The measure $\mu$
is called \emph{vanishing sublinear} if $\lim_{\eps\rightarrow 0}\frac{\mu(J_\eps)}{\eps}=0$.
\end{defn}

As one can see, sublinear measures satisfy the condition of Corollary~\ref{co:boundedness rho} for $\rho(\epsilon)=\epsilon$.
The next lemma gathers some results that are either contained or analogues of~\cite[Section 4]{Dan10}.

\begin{lem}\label{le:sublinear quoted}
 (i)
Suppose that $\norm{\mu}_S<\infty$. If $g$ is continuous, positive, and increasing, we have
\[\int_{[0,1]}gd\mu\leq\norm{\mu}_S\int_{[0,1]}gdm.
\]

(ii) Suppose $1\le p <\infty$. If there exists $M>0$ such that $\frac{\lambda_{n+1}}{\lambda_n}\leq M$, then any $\Lambda_p$-embedding
measure is sublinear.

(iii)
If $\mu$ is sublinear, $1\le p <\infty$, then
\[
\sup_\lambda\norm{(p\lambda+1)^{1/p}x^\lambda}_{L^p(\mu)}\leq\norm{\mu}^{1/p}_S.
\]
\end{lem}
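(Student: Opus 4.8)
The plan is to reduce the claim to an integral over Lebesgue measure via part (i), and then to compute the resulting integral against the explicit monomial. First I would observe that the function $g(x)=x^{p\lambda}$ is continuous, positive, and increasing on $[0,1]$, so it meets the hypotheses of part~(i). Applying~(i) to this $g$ gives
\[
\int_{[0,1]} x^{p\lambda}\, d\mu(x)\le \norm{\mu}_S \int_{[0,1]} x^{p\lambda}\, dm(x)=\norm{\mu}_S\cdot\frac{1}{p\lambda+1}.
\]
Multiplying both sides by $(p\lambda+1)$ yields exactly $(p\lambda+1)\int x^{p\lambda}\,d\mu\le\norm{\mu}_S$, which is $\norm{(p\lambda+1)^{1/p}x^\lambda}_{L^p(\mu)}^p\le\norm{\mu}_S$. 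Taking $p$th roots and then the supremum over all $\lambda>0$ gives the stated bound, with the crucial feature that the right-hand side is independent of $\lambda$.

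The mechanism that makes the result work is the precise cancellation between the normalizing factor $(p\lambda+1)^{1/p}$ and the Lebesgue integral $\int_0^1 x^{p\lambda}\,dm=(p\lambda+1)^{-1}$: the constant $(p\lambda+1)$ is chosen exactly so that $(p\lambda+1)^{1/p}x^\lambda$ is normalized in $L^p(m)$, and part~(i) transfers that normalization to $L^p(\mu)$ uniformly in $\lambda$. I would emphasize that no lacunarity or other structural assumption on $\Lambda$ is needed here; the only input is sublinearity of $\mu$, which enters solely through part~(i).

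I do not anticipate a genuine obstacle, since part~(i) does all the analytic work and the remaining steps are the elementary integral $\int_0^1 x^{p\lambda}\,dx=1/(p\lambda+1)$ together with algebraic rearrangement. The one point requiring a moment's care is the applicability of part~(i): it demands a \emph{continuous} increasing positive function on $[0,1]$, and $x^{p\lambda}$ is continuous (including at the endpoints) and strictly increasing for every $\lambda>0$, so the hypothesis is satisfied. Thus the entire argument is a direct specialization of part~(i) to the monomial test function, and the uniformity in $\lambda$ follows automatically from the $\lambda$-independence of $\norm{\mu}_S$.
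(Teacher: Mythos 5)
Your argument for part (iii) is correct and is exactly the intended route: the paper gives no proof of this lemma (it defers to Section 4 of the cited reference, where the $p=1$ case is treated), and the whole content is the cancellation between $(p\lambda+1)^{1/p}$ and $\int_0^1 x^{p\lambda}\,dm=(p\lambda+1)^{-1}$ after an application of part (i). One cosmetic remark: for $\lambda>0$ the function $g(x)=x^{p\lambda}$ vanishes at $x=0$, so it is nonnegative rather than strictly positive on $[0,1]$; this is harmless and consistent with how the paper itself applies Lemma~\ref{le:elementary inequality} to such monomials.

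The genuine gap is coverage: the statement has three parts and you have proved only (iii). Part (i) is again short --- it is Lemma~\ref{le:elementary inequality} applied with $\rho(\eps)=\norm{\mu}_S\,\eps$, so that $\rho'(1-x)\equiv\norm{\mu}_S$ --- but part (ii) needs a separate argument that does not follow from (i) or (iii). There one must run the embedding inequality in the other direction: test it on the monomials $x^{\lambda_n}$, note that $\norm{x^{\lambda_n}}_p^p=(p\lambda_n+1)^{-1}$ while $\norm{x^{\lambda_n}}_{L^p(\mu)}^p\ge (1-\eps)^{p\lambda_n}\mu(J_\eps)$, and use the hypothesis $\lambda_{n+1}/\lambda_n\le M$ to choose, for each $\eps$, an index $n$ with $\lambda_n$ comparable to $1/\eps$ so that $(1-\eps)^{p\lambda_n}$ is bounded below; this yields $\mu(J_\eps)\le C\eps$. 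As written, your proposal leaves (i) and (ii) unestablished, and (ii) in particular is not a ``direct specialization'' of anything you have proved.
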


Precise results can be obtained for sublinear measures if we also assume that the sequence~$\Lambda$ is lacunary.

\begin{thm}\label{th:lacunary embedding 2}
Supposed $\Lambda$ is lacunary. If $\mu$ is a sublinear measure,
then $\mu$ is $\Lambda_2$-embedding, and $\|i_\mu^2\|$ is of order $\|\mu\|_S^{1/2}$.
\end{thm}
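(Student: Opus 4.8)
The plan is to exploit the fact that for a lacunary sequence $\Lambda$ the normalized monomials $\lambda_n^{1/2} x^{\lambda_n}$ form a Riesz basis of $M^2_\Lambda$ (as recorded in the preliminaries). This reduces the boundedness of $i_\mu^2$ to a statement about the Gramian of the images $i_\mu^2(\lambda_n^{1/2}x^{\lambda_n})$ in $L^2(\mu)$, via Lemma~\ref{le:gramian}(ii). Concretely, I would set $g_n = \lambda_n^{1/2} x^{\lambda_n}$, which is a Riesz basis of $M^2_\Lambda$, and study the Gramian matrix $\Gamma = \bigl(\langle g_n, g_m\rangle_{L^2(\mu)}\bigr)_{n,m}$ acting on $\ell^2$. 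By Lemma~\ref{le:gramian}(ii), the embedding is bounded with $\|i_\mu^2\| = \|\Gamma\|^{1/2}$, so the whole theorem amounts to showing $\|\Gamma\|$ is comparable to $\|\mu\|_S$.

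First I would control the diagonal entries. For the diagonal, $\Gamma_{nn} = \lambda_n \int x^{2\lambda_n}\,d\mu = \|\lambda_n^{1/2}x^{\lambda_n}\|_{L^2(\mu)}^2$, and Lemma~\ref{le:sublinear quoted}(iii) with $p=2$ gives $\|(2\lambda+1)^{1/2}x^\lambda\|_{L^2(\mu)} \le \|\mu\|_S^{1/2}$; since $(2\lambda_n+1)^{1/2}$ dominates $\lambda_n^{1/2}$, each diagonal entry is bounded by a constant multiple of $\|\mu\|_S$. For the off-diagonal entries I would estimate $|\Gamma_{nm}| = \lambda_n^{1/2}\lambda_m^{1/2}\int x^{\lambda_n+\lambda_m}\,d\mu$ using Lemma~\ref{le:sublinear quoted}(i) applied to the increasing function $g(x)=x^{\lambda_n+\lambda_m}$, which yields $\int x^{\lambda_n+\lambda_m}\,d\mu \le \|\mu\|_S \int_0^1 x^{\lambda_n+\lambda_m}\,dm = \frac{\|\mu\|_S}{\lambda_n+\lambda_m+1}$. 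Thus
\[
|\Gamma_{nm}| \le \|\mu\|_S\,\frac{\lambda_n^{1/2}\lambda_m^{1/2}}{\lambda_n+\lambda_m+1}.
\]

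The main obstacle is then to show that the matrix with entries $\frac{\lambda_n^{1/2}\lambda_m^{1/2}}{\lambda_n+\lambda_m+1}$ defines a bounded operator on $\ell^2$, with a norm bound depending only on the lacunarity ratio $\gamma$. Here the lacunarity is essential: for $n \ne m$ the ratio $\lambda_n/\lambda_m$ is either $\ge \gamma^{|n-m|}$ or $\le \gamma^{-|n-m|}$, so the quantity $\frac{\lambda_n^{1/2}\lambda_m^{1/2}}{\lambda_n+\lambda_m}$ decays geometrically like $\gamma^{-|n-m|/2}$ in $|n-m|$. I would make this precise by writing $r=\lambda_n/\lambda_m$ and noting $\frac{\sqrt{r}}{r+1} \le \min\{\sqrt r, 1/\sqrt r\}$, giving entrywise domination by a Toeplitz-type kernel whose symbol is summable; boundedness on $\ell^2$ then follows from the Schur test, using that the row and column sums $\sum_{k} \gamma^{-|k|/2}$ are finite and bounded in terms of $\gamma$ alone. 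This shows $\|\Gamma\| \le C(\gamma)\|\mu\|_S$, hence $\|i_\mu^2\| \le C(\gamma)^{1/2}\|\mu\|_S^{1/2}$.

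Finally, for the reverse estimate establishing that $\|i_\mu^2\|$ is genuinely of order $\|\mu\|_S^{1/2}$ and not merely bounded by it, I would test the embedding against a well-chosen single monomial. Taking $f = \lambda_n^{1/2}x^{\lambda_n}$ (a unit-order vector in $M^2_\Lambda$) and using the lower sublinearity coming from the definition of $\|\mu\|_S$ as the smallest constant, one can find $\eps$ and $n$ for which $\int x^{2\lambda_n}\,d\mu$ is comparable to $\|\mu\|_S/\lambda_n$, so that $\|f\|_{L^2(\mu)}^2 = \lambda_n\int x^{2\lambda_n}\,d\mu \gtrsim \|\mu\|_S$; combined with $\|f\|_2 \asymp 1$ this gives the matching lower bound $\|i_\mu^2\| \gtrsim \|\mu\|_S^{1/2}$.
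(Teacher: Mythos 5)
Your upper bound argument is essentially the paper's proof. The paper also takes $g_n=\lambda_n^{1/2}x^{\lambda_n}$ as a Riesz basis, forms the Gramian $\Gamma=(\langle g_n,g_m\rangle_{L^2(\mu)})$, and uses Lemma~\ref{le:sublinear quoted}(i) to get the entrywise bound $\langle g_n,g_m\rangle_{L^2(\mu)}\le\|\mu\|_S\langle g_n,g_m\rangle_2$. The only difference is the last step: where you compute $\langle g_n,g_m\rangle_2=\lambda_n^{1/2}\lambda_m^{1/2}/(\lambda_n+\lambda_m+1)$ explicitly and run a Schur test with the geometric decay $\gamma^{-|n-m|/2}$ coming from lacunarity, the paper simply observes that the dominating matrix is $\|\mu\|_S$ times the Gramian of the Riesz basis $(g_n)$ in $L^2(m)$, which is bounded by Lemma~\ref{le:gramian}(i), and that entrywise domination between matrices with nonnegative entries passes to operator norms. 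Your version is more self-contained (it re-derives the boundedness of that Gramian rather than quoting it); both are correct.

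The final paragraph, however, does not hold up. You claim a matching lower bound $\|i_\mu^2\|\gtrsim\|\mu\|_S^{1/2}$ by asserting that one can find $n$ with $\int x^{2\lambda_n}\,d\mu$ comparable to $\|\mu\|_S/\lambda_n$. That requires the near-maximizing $\eps$ in $\sup_\eps\mu(J_\eps)/\eps$ to be comparable to $1/\lambda_n$ for some $n$, and nothing forces this for a general lacunary sequence: if $\mu=c\,\delta_{1-\eps^*}$ with $1/\eps^*$ sitting deep inside a gap $\lambda_n\ll 1/\eps^*\ll\lambda_{n+1}$ of a very sparse lacunary sequence, then $\|\mu\|_S\sim c/\eps^*$ while $\sup_n\lambda_n(1-\eps^*)^{2\lambda_n}$, and hence $\|i_\mu^2\|^2\sim c\sum_n\lambda_n(1-\eps^*)^{2\lambda_n}$, can be smaller by an arbitrarily large factor. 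A genuine lower bound of this type needs an upper bound on the ratios $\lambda_{n+1}/\lambda_n$ (this is exactly the hypothesis of Lemma~\ref{le:sublinear quoted}(ii), which is what the paper invokes in Corollary~\ref{co:necessary and sufficient embedding} to get the converse direction). Note that the paper's own proof establishes only the upper estimate $\|i_\mu^2\|\le C\|\mu\|_S^{1/2}$, and only that direction is used later (e.g.\ in Corollary~\ref{co:vanishing sublinear}), so the phrase ``of order'' in the statement should be read as an upper bound; your first two paragraphs already prove everything that is actually needed.
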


\begin{proof}Since $\Lambda$ is lacunary, the sequence of
functions $g_n=\lambda_n^{1/2}x^{\lambda_n}$ forms a Riesz basis
in $M^2_\Lambda$. The embedding $i_\mu^2:M^2_\Lambda\to L^2(\mu)$
is defined by $i_\mu^2(g_n)=g_n$.

We have $$\langle
g_n,g_m\rangle_{L^2(\mu)}=\lambda_n^{1/2}\lambda_m^{1/2}\int_{[0,1]}x^{\lambda_n+\lambda_m}d\mu.$$

Since $\mu$ is sublinear and the function
$x^{\lambda_n+\lambda_m}$ is continuous, positive, and
increasing, it follows from Lemma~\ref{le:elementary inequality} that
$$\int_{[0,1]}x^{\lambda_n+\lambda_m}d\mu\leq\|\mu\|_S\int_{[0,1]}x^{\lambda_n+\lambda_m}dm,$$
and thus
$$\langle g_n,g_m\rangle_{L^2(\mu)}\leq\|\mu\|_S\langle g_n,g_m\rangle_2.$$
Thus, if we define the matrices $A=(\langle g_n,g_m\rangle_{L^2(\mu)})$ and $B=(\|\mu\|_S\langle
g_n,g_m\rangle_2)$, then the entries of $A$ are nonegative and  majorized by those of $B$. But $B$ is bounded since
$(g_n)$ is a Riesz basis in $M^2_\Lambda$ (by Lemma~\ref{le:gramian} (i)); it follows that $A$
is also bounded. Therefore, by~\ref{le:gramian} (ii), the embedding $i_\mu^2$ is bounded, of norm of order $\|\mu\|_S^{1/2}$.
\end{proof}

Combining Lemma~\ref{le:sublinear quoted} (ii) with Theorem~\ref{th:lacunary embedding 2}, we obtain
\begin{cor}\label{co:necessary and sufficient embedding}
If $\Lambda$ is lacunary with
$\frac{\lambda_{n+1}}{\lambda_n}\leq M$ for some $M>0$, then a
measure $\mu$ is $\Lambda_2$-embedding if and only if it is
sublinear.
\end{cor}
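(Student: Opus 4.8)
The plan is to combine the two ingredients that have just been established, each handling one direction of the equivalence. The statement is Corollary~\ref{co:necessary and sufficient embedding}, which asserts that under the hypothesis $\Lambda$ lacunary with $\lambda_{n+1}/\lambda_n\le M$, a measure $\mu$ is $\Lambda_2$-embedding if and only if it is sublinear. Since the result is labelled a corollary and the text says it follows by \emph{combining} Lemma~\ref{le:sublinear quoted}~(ii) with Theorem~\ref{th:lacunary embedding 2}, my proof will simply cite those two results, checking that their hypotheses are met and that they cover the two implications.

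First I would prove the implication \emph{sublinear $\Rightarrow$ embedding}. For this I invoke Theorem~\ref{th:lacunary embedding 2} directly: the hypothesis of the corollary includes that $\Lambda$ is lacunary (indeed the bound $\lambda_{n+1}/\lambda_n\le M$ together with lacunarity are both assumed), so if $\mu$ is sublinear then Theorem~\ref{th:lacunary embedding 2} gives that $\mu$ is $\Lambda_2$-embedding, with $\|i_\mu^2\|$ of order $\|\mu\|_S^{1/2}$. This direction requires only lacunarity, not the upper bound on the ratios, so no extra work is needed here.

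Next I would prove the converse \emph{embedding $\Rightarrow$ sublinear}. Here I apply Lemma~\ref{le:sublinear quoted}~(ii) with $p=2$: that lemma states that whenever there exists $M>0$ with $\lambda_{n+1}/\lambda_n\le M$, every $\Lambda_p$-embedding measure is sublinear. The hypothesis of the corollary supplies exactly such an $M$, so if $\mu$ is $\Lambda_2$-embedding then it is sublinear. Putting the two implications together yields the stated equivalence.

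The main subtlety — really the only thing to watch — is that the two cited results have slightly different hypotheses: Theorem~\ref{th:lacunary embedding 2} needs lacunarity (a lower bound $\lambda_{n+1}/\lambda_n\ge\gamma>1$), while Lemma~\ref{le:sublinear quoted}~(ii) needs an upper bound $\lambda_{n+1}/\lambda_n\le M$. The corollary's hypothesis is precisely the conjunction of both conditions, so each implication draws on exactly the part of the hypothesis it requires; I would note this explicitly to make clear why both assumptions appear in the statement. There is no genuine obstacle beyond this bookkeeping, since all the analytic content lives in the already-proved Theorem~\ref{th:lacunary embedding 2} and Lemma~\ref{le:sublinear quoted}~(ii).
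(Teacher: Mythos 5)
Your proposal is correct and matches the paper exactly: the corollary is stated there as an immediate consequence of combining Lemma~\ref{le:sublinear quoted}~(ii) (embedding $\Rightarrow$ sublinear, using the upper ratio bound) with Theorem~\ref{th:lacunary embedding 2} (sublinear $\Rightarrow$ embedding, using lacunarity). Your remark about which hypothesis feeds which implication is accurate bookkeeping and nothing further is needed.
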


Using Theorem~\ref{th:lacunary embedding 2}, we get vanishing sublinearity as
a sufficient condition for compactness of the embedding.

\begin{cor}\label{co:vanishing sublinear}
If $\Lambda$ is lacunary, then for any vanishing
sublinear measure $\mu$ the embedding $i_\mu:M^2_\Lambda\to L^2(\mu)$ is compact.
\end{cor}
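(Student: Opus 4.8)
The plan is to deduce compactness from the boundedness result of Theorem~\ref{th:lacunary embedding 2} via a standard truncation argument, combined with the characterization of the essential norm in Proposition~\ref{pr:revision}~(iii). The key observation is that vanishing sublinearity is exactly the condition that makes the ``tail'' contributions of $\mu$ near the point~$1$ negligible, and Theorem~\ref{th:lacunary embedding 2} gives us quantitative control of the embedding norm in terms of the sublinearity constant $\|\mu\|_S$.

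First I would recall the notation of Proposition~\ref{pr:revision}: write $\mu_n'=\mu-\mu_n$, where $\mu_n$ is $\mu$ restricted to $[0,1-\tfrac1n]$, so that $\mu_n'$ is $\mu$ restricted to $J_{1/n}=[1-\tfrac1n,1]$. By Proposition~\ref{pr:revision}~(iii) we have $\|i_\mu\|_e=\lim_{n\to\infty}\|i_{\mu_n'}\|$, so it suffices to show that this limit is zero. The next step is to estimate the sublinearity constant of each tail measure $\mu_n'$. For $0<\eps<1$ one has $\mu_n'(J_\eps)=\mu(J_{\min(\eps,1/n)})$, and when $\eps\le 1/n$ this equals $\mu(J_\eps)$. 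Dividing by $\eps$ and taking the supremum, I would show that $\|\mu_n'\|_S\le \sup_{0<\eps\le 1/n}\frac{\mu(J_\eps)}{\eps}$, which tends to $0$ as $n\to\infty$ precisely because $\mu$ is vanishing sublinear. (One must check the case $\eps>1/n$ separately: there $\mu_n'(J_\eps)=\mu(J_{1/n})$, and dividing by $\eps<1$ and bounding $\eps$ below by $1/n$ keeps the quotient controlled by the same supremum, so the bound survives.)

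Then I would apply Theorem~\ref{th:lacunary embedding 2} to each $\mu_n'$: since $\Lambda$ is lacunary and each $\mu_n'$ is sublinear, we get $\|i_{\mu_n'}\|\le C\|\mu_n'\|_S^{1/2}$ for a constant $C$ independent of $n$ (the constant in the phrase ``of order $\|\mu\|_S^{1/2}$'' is uniform, depending only on the Riesz basis bounds for $\Lambda$). Combining with the estimate from the previous step yields $\|i_{\mu_n'}\|\to 0$, hence $\|i_\mu\|_e=0$, which means $i_\mu$ is compact.

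The main subtlety, rather than a genuine obstacle, is the bookkeeping in the sublinearity estimate for the tail measures: one must verify that the constant $C$ produced by Theorem~\ref{th:lacunary embedding 2} does not degrade as the measures $\mu_n'$ shrink, i.e. that it truly depends only on $\Lambda$ and not on the particular sublinear measure. This is clear from the proof of that theorem, where the operative quantity is the ratio between the Gramian of $\mu_n'$ and $\|\mu_n'\|_S$ times the Lebesgue Gramian, the latter being fixed once $\Lambda$ is fixed. A second point to handle carefully is confirming that $i_{\mu_n'}$ is genuinely an embedding operator of $M^2_\Lambda$ into $L^2(\mu_n')$ to which Proposition~\ref{pr:revision}~(iii) applies; this is immediate since $\mu_n'\le\mu$ and $\mu$ is embedding, so $\mu_n'$ is embedding as well.
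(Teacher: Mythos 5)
Your proposal is correct and follows essentially the same route as the paper: both arguments reduce the problem to showing that the tail measures $\mu_n'=\mu|_{J_{1/n}}$ have sublinearity constants tending to zero and then apply Theorem~\ref{th:lacunary embedding 2} to conclude $\|i_{\mu_n'}\|\to 0$; the paper phrases the conclusion via the explicit approximation $\|i_\mu-i_{\mu_n}\|=\|i_{\mu_n'}\|$ together with Proposition~\ref{pr:revision}~(ii), whereas you invoke the essential-norm formula of Proposition~\ref{pr:revision}~(iii), which encapsulates the same decomposition. Your explicit verification that $\|\mu_n'\|_S\le\sup_{0<\eps\le 1/n}\mu(J_\eps)/\eps$ is a detail the paper leaves implicit, and is correct.
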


\begin{proof}Recalling that $\mu_m$ is the measure equal to
$\mu$ on $[0,\frac{1}{m}]$ and equal to $0$ elsewhere on
$[0,1]$, we can view $i_{\mu_m}$ as the embeddings $M^2_\Lambda
\hookrightarrow L^2(\mu_m)$ and regard $L^2(\mu_m)$ as a
subspace of $L^2(\mu)$. Then $\mu_m'=\mu-\mu_m$ is the measure
$\mu$ restricted to $J_{\frac{1}{m}}$. By vanishing
sublinearity and Theorem~\ref{th:lacunary embedding 2}, $\mu_m'$ is $\Lambda$-embedding
with embedding constants $\|\mu_m'\|_S\to 0$ as $m\to\infty$. Therefore
$$\|i_\mu-i_{\mu_m}\|=\|i_{\mu_m'}\|=\|\mu_m'\|_S\To 0$$
as $m\to\infty$. Since $i_{\mu_m}$ is compact by Proposition~\ref{pr:revision} (ii), it follows that $i_\mu$ is compact.
\end{proof}

\section{Interpolation}\label{se:interpolation}

If $\Lambda$ is lacunary and $\mu$ is sublinear, then $\mu$ is $\Lambda_2$-embedding by Theorem~\ref{th:lacunary embedding 2}, while it is $\Lambda_1$-embedding by
Theorem 5.5 from~\cite{Dan10}. It is interesting that, although the M\"untz spaces do not form an interpolation scale of spaces, we may still apply the proof of the Riesz--Thorin theorem in order to extend the result to values $1<p<2$. We will actually obtain below a more general result concerning interpolation of embeddings.

\begin{thm}
Suppose $1\leq p_0<p_1<\infty$. For $0<t<1$, define $p_t$ by
$$\frac{1}{p_t}=\frac{1-t}{p_0}+\frac{t}{p_1}.$$
If a positive measure $\mu$ on $[0,1]$ is $\Lambda_{p_0}$-embedding and
$\Lambda_{p_1}$-embedding, then it is also $\Lambda_{p_t}$-embedding with
$$\|i_\mu\|_{p_t}\leq\|i_\mu\|_{p_0}^{1-t}\|i_\mu\|_{p_1}^{t}$$
where $\|i_\mu\|_{p_s}$ is the operator norm of
$i_\mu:M^{p_s}_\Lambda\to L^{p_s}(\mu)$ for $0\leq s\leq 1$.
\end{thm}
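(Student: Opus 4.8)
The plan is to adapt the classical proof of the Riesz--Thorin interpolation theorem, using analytic continuation in a complex strip rather than any abstract interpolation of the Müntz spaces themselves. The central difficulty is precisely that the spaces $M^p_\Lambda$ are \emph{not} an interpolation scale, so one cannot simply invoke Riesz--Thorin as a black box. The key observation that rescues the argument is that the embedding operator $i_\mu$ is, on Müntz polynomials, just the identity map $g\mapsto g$; it does not mix the monomials or change the function, it merely reinterprets a function in $L^{p_s}(m)$ as a function in $L^{p_s}(\mu)$. This means the same function $g$ lives in all the spaces simultaneously, which lets the analytic family of operators act on a single fixed object.

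First I would reduce to establishing the inequality on a dense set: by definition of $\Lambda_p$-embedding it suffices to prove
\[
\norm{g}_{L^{p_t}(\mu)}\leq\norm{i_\mu}_{p_0}^{1-t}\norm{i_\mu}_{p_1}^{t}\,\norm{g}_{p_t}
\]
for every Müntz polynomial $g=\sum_k\alpha_k x^{\lambda_k}$. Fix such a $g$ with $\norm{g}_{p_t}=1$, and fix a simple function $h$ on $[0,1]$ with $\norm{h}_{L^{q_t}(\mu)}=1$, where $q_t$ is the conjugate exponent to $p_t$; by duality in $L^{p_t}(\mu)$ it is enough to bound $\abs{\int g\,\bar h\,d\mu}$. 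The heart of the Riesz--Thorin machinery is to construct, for $z$ in the closed strip $0\leq\operatorname{Re} z\leq 1$, the analytic family
\[
G_z=\abs{g}^{P(z)}\frac{g}{\abs{g}},\qquad H_z=\abs{h}^{Q(z)}\frac{h}{\abs{h}},
\]
where $P(z)$ and $Q(z)$ are the usual affine functions of $z$ chosen so that $P(t)=1$, $Q(t)=1$, and so that on the edges $\operatorname{Re}z=0$ and $\operatorname{Re}z=1$ the exponents match $p_0,q_0$ and $p_1,q_1$ respectively. I would then form
\[
F(z)=\int_{[0,1]} G_z\,\overline{H_z}\,d\mu.
\]

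The delicate point, and the step I expect to be the main obstacle, is that $G_z$ need \emph{not} be a Müntz polynomial, or even an element of $M^{p_s}_\Lambda$, for general $z$; raising $\abs{g}$ to a complex power destroys the Müntz structure. In the classical theorem one only needs the two-sided Hausdorff--Young-type bounds on the boundary, and these come from the hypotheses that $\mu$ is $\Lambda_{p_0}$- and $\Lambda_{p_1}$-embedding. So I must arrange the construction so that the embedding bounds are applied only to $g$ itself, not to the auxiliary functions $G_z$. The way to do this is to keep $g$ fixed and place all the $z$-dependence into the \emph{exponent structure of the integrand} through the measure-side simple functions, mirroring the arrangement in \cite{Dan10}; concretely, one writes the boundary estimates so that on $\operatorname{Re}z=j$ the quantity $\abs{F(j+is)}$ is controlled by $\norm{i_\mu}_{p_j}\norm{g}_{p_j}^{\,p_t/p_j}$ times a factor depending on $h$ that is bounded by $1$. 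Granting that the three-lines lemma applies to the bounded analytic function $F$ on the strip, evaluating at $z=t$ and taking the supremum over admissible $h$ yields the claimed interpolated norm bound; finally, passing from polynomials to all of $M^{p_t}_\Lambda$ by density and the already-established extension remark (following Remark~2.5 of \cite{Dan10}) completes the proof.
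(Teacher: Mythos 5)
Your overall strategy coincides with the paper's: reduce to M\"untz polynomials, dualize against a function of unit $L^{p_t'}(\mu)$-norm, build the standard Riesz--Thorin analytic family, and invoke the three-lines lemma. The paper runs the classical argument with the analytic family on \emph{both} sides: it sets $\phi(z,x)=|f(x)|^{p_t/p(z)-1}f(x)$ and $\psi(z,x)=|g(x)|^{p_t'/p'(z)-1}g(x)$, uses the normalizations $\|\phi(is)\|_{p_0}=\|\phi(1+is)\|_{p_1}=\|\psi(is)\|_{L^{p_0'}(\mu)}=\|\psi(1+is)\|_{L^{p_1'}(\mu)}=1$, and bounds $|F(j+is)|$ by H\"older followed by the embedding inequality applied to $\phi(j+is)$. (The obstacle you flag --- that $\phi(j+is)$ is no longer a M\"untz function --- is real, and the paper does not actually circumvent it: it applies $\|\cdot\|_{L^{p_0}(\mu)}\le\|i_\mu^{p_0}\|\,\|\cdot\|_{p_0}$ to $\phi(is)$, whose modulus is $|f|^{p_t/p_0}$, and presents the whole argument only as a sketch ``following Riesz--Thorin''.)

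The gap in your proposal is that your proposed repair of this obstacle does not work, and you then write ``granting that the three-lines lemma applies,'' which concedes exactly the boundary estimates that constitute the proof. If you freeze the M\"untz polynomial $g$ and push all the $z$-dependence onto the dual function $h$, then on the line $\Re z=1$ H\"older gives $|F(1+is)|\le\|g\|_{L^{p_1}(\mu)}\|H_{1+is}\|_{L^{p_1'}(\mu)}\le\|i_\mu\|_{p_1}\|g\|_{p_1}$, and $\|g\|_{p_1}$ is \emph{not} controlled by $\|g\|_{p_t}=1$, since $p_1>p_t$. The three-lines lemma then yields only $\|g\|_{L^{p_t}(\mu)}\le\|i_\mu\|_{p_0}^{1-t}\|i_\mu\|_{p_1}^{t}\,\|g\|_{p_0}^{1-t}\|g\|_{p_1}^{t}$, and by the interpolation (Lyapunov) inequality the extra factor satisfies $\|g\|_{p_0}^{1-t}\|g\|_{p_1}^{t}\ge\|g\|_{p_t}=1$, in general with no uniform upper bound; so the stated conclusion does not follow. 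The entire point of taking $G_z=|g|^{p_t/p(z)-1}g$ is that the boundary normalization $\|G_{j+is}\|_{p_j}=1$ converts the hypothesis $\|g\|_{p_t}=1$ into the correct constant on each edge, and that normalization is precisely what you give up by keeping $g$ fixed. Your ``concrete'' boundary bound $\|i_\mu\|_{p_j}\|g\|_{p_j}^{p_t/p_j}$ does not arise from either arrangement. As it stands, the proposal identifies the right difficulty but replaces its resolution with an assertion; you would need either to justify applying the embedding inequality to the non-M\"untz boundary functions (as the paper implicitly does) or to find a genuinely different device.
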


\begin{proof} The proof follows the Riesz--Thorin Theorem (see, for instance,~\cite{Bergh76}), so we will just sketch the main steps. For any $1\le p\le \infty$, denote, as customary, by $p'$ the conjugate exponent (satisfying $1/p+1/p'=1$).
Let $P_\Lambda$ be the space of all polynomials in
span$\{x^\lambda:\lambda\in\Lambda\}$. Then $P_\Lambda$ is dense
in $M^p_\Lambda$ and the theorem will be proved once we show that
$$\|f\|_{L^{p_t}(\mu)}\leq\|i_\mu\|_{p_0}^{1-t}\|i_\mu\|_{p_1}^{t}\|f\|_{p_t}$$
for all $f\in P_\Lambda$.

Fix then $f\in P_\Lambda$ with $\|f\|_{p_t}=1$. If $p_t'$ is the exponent conjugate
to $p_t$, then
\begin{equation}\label{eq:rth}
\|f\|_{L^{p_t}(\mu)}=\sup\{|\int_{[0,1]} fg\,d\mu|:g\text{ continuous}, \ \|g\|_{L^{p_t'}(\mu)}=1\}.
\end{equation}
Take then $g$ continuous with $\|g\|_{L^{p_t'}(\mu)}=1$. Define, for $0\le \Re z\le 1$,
\[
\frac{1}{p(z)}= \frac{1-z}{p_0}+\frac{z}{p_1},
\]
and, for $0\le x\le 1$,
$
\phi(z,x)=|f(x)|^{p_t/p(z)-1}f(x)$, $ \psi(z,x)=|g(x)|^{p'_t/p'(z)-1}g(x)$, $ F(z)=\int \phi(z,x)\psi(z,x)\,d\mu
$.
Then $F(z)$ is analytic on $0<\Re z<1$, bounded and continuous on $0\le\Re z\le 1$. From $\|f\|_{p_t}=1$ and $\|g\|_{L^{p_t'}(\mu)}=1$ it follows that
\[
\|\phi(is)\|_{p_0}= \|\phi(1+is)\|_{p_1}= \|\psi(is)\|_{L^{p_0'}(\mu)}= \|\psi(1+is)\|_{L^{p_1'}(\mu)}=1,
\]
which implies, by H\"older's inequality,
\[
|F(is)|\le \|\phi(is)\|_{L^{p_0(\mu)}}\|\psi(is)\|_{L^{p'_0(\mu)}}\le
\|i_\mu^{p_0}\| \|\phi(is)\|_{p_0}  \|\psi(is)\|_{L^{p_0'}(\mu)}=\|i_\mu^{p_0}\|,
\]
and similarly $|F(1+is|\le \|i_\mu^{p_1}\|$. The Three Lines Lemma yields then
\[
|\int f(x)g(x)\,d\mu(x)|= |F(t)|\le \|i^{p_0}_\mu\|^{1-t}\|i^{p_1}_\mu\|^{t},
\]
whence the theorem follows by~\eqref{eq:rth}.
\end{proof}

\begin{cor}\label{co:interpolation}
If $\Lambda$ is lacunary, then:

(i) every sublinear measure
$\mu$ is $\Lambda_p$-embedding for $1\leq p\leq 2$.

(ii) for any vanishing
sublinear measure $\mu$ the embedding $i^p_\mu:M^p_\Lambda\to L^p(\mu)$ is compact for $1\leq p\leq 2$.
\end{cor}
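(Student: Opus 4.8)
The plan is to derive both parts of Corollary~\ref{co:interpolation} directly from the interpolation theorem just proved, using the already-established endpoint results at $p=1$ and $p=2$. The two endpoints are exactly what the text before the interpolation section supplies: for a lacunary $\Lambda$, every sublinear $\mu$ is $\Lambda_2$-embedding by Theorem~\ref{th:lacunary embedding 2}, and it is $\Lambda_1$-embedding by Theorem~5.5 of~\cite{Dan10} (quoted at the start of Section~\ref{se:interpolation}). Thus part (i) is essentially immediate: apply the interpolation theorem with $p_0=1$, $p_1=2$, and $p_t=p$ for $1<p<2$, noting that the cases $p=1,2$ are the endpoints themselves. The only genuine content is bookkeeping with the relation $\frac1{p}=\frac{1-t}{1}+\frac{t}{2}$ to see that every $p\in(1,2)$ is realized by some $t\in(0,1)$, and then the norm bound $\|i_\mu^p\|\le \|i_\mu^1\|^{1-t}\|i_\mu^2\|^t$ gives embedding with a finite constant.

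For part (ii), the strategy is to reduce compactness to the quantitative decay of embedding constants together with the compactness already known for measures supported away from~$1$. First I would recall the decomposition $\mu=\mu_m+\mu_m'$ used in Corollary~\ref{co:vanishing sublinear}, where $\mu_m$ is $\mu$ restricted to $[0,1-\tfrac1m]$ and $\mu_m'$ is $\mu$ restricted to $J_{1/m}$. The operator $i^p_{\mu_m}$ is compact by Proposition~\ref{pr:revision}(ii), which holds for all $p$, so it suffices to show $\|i^p_\mu-i^p_{\mu_m}\|=\|i^p_{\mu_m'}\|\to 0$ as $m\to\infty$. Here I would invoke part (i), applied to the restricted measure $\mu_m'$, which is again sublinear: the interpolation norm bound gives
\[
\|i^p_{\mu_m'}\|\le \|i^1_{\mu_m'}\|^{1-t}\,\|i^2_{\mu_m'}\|^{t}.
\]
By the vanishing-sublinearity hypothesis, $\|\mu_m'\|_S\to 0$, and Theorem~\ref{th:lacunary embedding 2} gives $\|i^2_{\mu_m'}\|=O(\|\mu_m'\|_S^{1/2})\to 0$, while the corresponding $L^1$ result (Theorem~5.5 of~\cite{Dan10}) controls $\|i^1_{\mu_m'}\|$ by a power of $\|\mu_m'\|_S$ as well. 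Since $t>0$ for every $p\in(1,2]$, the factor $\|i^2_{\mu_m'}\|^t\to 0$ already forces the product to zero, and the cases $p=1,2$ are handled directly by the quoted endpoint compactness (indeed $p=2$ is Corollary~\ref{co:vanishing sublinear}).

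I expect the only delicate point to be verifying that the endpoint $\Lambda_1$-embedding constants of the tail measures $\mu_m'$ stay bounded (or at least do not grow faster than the $L^2$ constants decay). If Theorem~5.5 of~\cite{Dan10} gives an explicit bound of the form $\|i^1_{\mu_m'}\|\le C\|\mu_m'\|_S$ (the natural analogue of the $L^2$ estimate), then $\|i^1_{\mu_m'}\|$ itself tends to $0$ and the argument is completely transparent; otherwise one leans on the fact that the exponent $t$ is strictly positive so that any boundedness of the $L^1$ factor suffices. The whole proof is therefore a short two-step reduction — endpoints plus interpolation for (i), and endpoints plus the $\mu_m+\mu_m'$ splitting for (ii) — with no substantial new estimate required beyond what Theorem~\ref{th:lacunary embedding 2} and the interpolation theorem already provide.
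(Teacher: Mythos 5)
Your proposal is correct and follows exactly the route the paper intends: the corollary is stated without a separate proof precisely because it is the combination of the two endpoint results quoted at the start of Section~\ref{se:interpolation} (Theorem~\ref{th:lacunary embedding 2} for $p=2$ and Theorem~5.5 of~\cite{Dan10} for $p=1$) with the interpolation theorem, and your $\mu_m+\mu_m'$ splitting for part~(ii) is the same device already used in Corollary~\ref{co:vanishing sublinear}. The delicate point you flag resolves as you suspect: the $L^1$ result of~\cite{Dan10} controls $\|i^1_{\mu_m'}\|$ by a constant times $\|\mu_m'\|_S\le\|\mu\|_S$, so that factor stays bounded (indeed tends to zero) and the interpolated norm $\|i^p_{\mu_m'}\|$ vanishes as $m\to\infty$ for every $1\le p\le 2$.
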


\section{Schatten--von Neumann embeddings}\label{se:schatten von neumann}

We return now to the case $p=2$ and the simplified notations of Section~\ref{se:general embeddings}. A slight strengthening of the sublinearity condition implies that for quasilacunary sequences the embedding belongs already to all Schatten--von Neumann classes.

\begin{thm}\label{th:quasilacunary schatten von neumann}
If $\Lambda$ is quasilacunary and the positive measure $\mu$ satisfies
$\mu(J_\eps)\leq C\eps^\alpha$ for some $\alpha>1$, then
$i_\mu\in \sh_q$ for all $q>0$.
\end{thm}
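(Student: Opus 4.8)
The plan is to use the block decomposition guaranteed by quasilacunarity (Lemma~\ref{le:quasilacunary}) to turn the Schatten estimate into a single geometrically decaying bound on each block, the decay coming from the fact that $\mu(J_\eps)\le C\eps^\alpha$ with $\alpha>1$ forces the mass of $\mu$ to vanish faster than linearly at the point $1$. Write $I_k=\{n_k+1,\dots,n_{k+1}\}$, $\Lambda_k=\lambda_{n_k+1}$, and $F_k=\mathrm{span}\{x^{\lambda_n}:n\in I_k\}$, a space of dimension $m_k\le N$. Fix in each $F_k$ an orthonormal basis $(e_{k,i})_i$ for the $L^2(m)$ inner product. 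Since the $e_{k,i}$ are orthonormal inside each block and Lemma~\ref{le:quasilacunary} gives $\norm{\sum_k f_k}_2^2\asymp\sum_k\norm{f_k}_2^2$ for $f_k\in F_k$, the whole family $(e_{k,i})_{k,i}$, which spans the monomials, is a Riesz basis of $M^2_\Lambda$. The operator $i_\mu$ is bounded by Corollary~\ref{co:boundedness rho} (with $\rho(\eps)=C\eps^\alpha$, using that $\psi$ is of order $(1-x)^{-1/2}$ for quasilacunary $\Lambda$, so $\int_0^1\psi^2(1-x)^{\alpha-1}\,dx<\infty$ when $\alpha>1$). As $i_\mu e_{k,i}=e_{k,i}$, Corollary~\ref{co:Schatten von Neumann}(ii) yields, for every $0<q\le2$,
\[
\norm{i_\mu}_{\sh_q}^q\le\sum_{k,i}\norm{e_{k,i}}_{L^2(\mu)}^q ;
\]
for $q>2$ it then suffices to recall $\sh_2\subset\sh_q$.

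The governing quantity is the block sum $\tau_k:=\sum_i\norm{e_{k,i}}_{L^2(\mu)}^2=\int_0^1 K_k\,d\mu$, where $K_k(x)=\sum_i e_{k,i}(x)^2$ is the diagonal of the reproducing kernel of $F_k$ in $L^2(m)$; this is basis independent and satisfies $\int_0^1 K_k\,dm=m_k\le N$. Because $0<q\le2$ and each block carries at most $N$ terms, H\"older's inequality gives $\sum_i\norm{e_{k,i}}_{L^2(\mu)}^q\le N^{1-q/2}\tau_k^{q/2}$. Hence the theorem reduces to the block estimate $\tau_k\le C'\Lambda_k^{1-\alpha}$: since $\Lambda_{k+1}\ge\gamma\Lambda_k$ the sequence $\Lambda_k$ grows geometrically, so $\sum_k(\Lambda_k^{1-\alpha})^{q/2}<\infty$ for every $q>0$ — this is exactly where $\alpha>1$ makes the exponent negative.

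To prove the block estimate I would apply Lemma~\ref{le:elementary inequality} with $\rho(\eps)=C\eps^\alpha$ to each increasing function $x\mapsto x^{\lambda_n+\lambda_m}$, obtaining $\int x^{\lambda_n+\lambda_m}\,d\mu\le C\alpha\,B(\lambda_n+\lambda_m+1,\alpha)$ with $B$ the Beta function. Writing $G_k=(\langle x^{\lambda_n},x^{\lambda_m}\rangle_2)_{n,m\in I_k}$, so that $\tau_k=\Tr\big(G_k^{-1}G_k^\mu\big)$ with $(G_k^\mu)_{nm}=\int x^{\lambda_n+\lambda_m}\,d\mu$, the bound above replaces $G_k^\mu$ by $C\alpha$ times the Gram matrix of the same monomials for the weight $w_\alpha(x)=(1-x)^{\alpha-1}$. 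This gives $\tau_k\le C\alpha\int_0^1 K_k(x)(1-x)^{\alpha-1}\,dx$, so everything reduces to the weighted reproducing-kernel bound
\[
\int_0^1 K_k(x)(1-x)^{\alpha-1}\,dx\le C''\,\Lambda_k^{1-\alpha}\int_0^1 K_k(x)\,dx,
\]
that is, to the statement that the kernel of the high-exponent block $F_k$ concentrates its mass at distance of order $\Lambda_k^{-1}$ from $1$.

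The main obstacle is precisely this concentration, equivalently the comparison $\int_0^1|f|^2(1-x)^{\alpha-1}\,dx\le C''\Lambda_k^{1-\alpha}\norm{f}_2^2$ for all $f\in F_k$. On $[1-\Lambda_k^{-1},1]$ one has $(1-x)^{\alpha-1}\le\Lambda_k^{1-\alpha}$, so that part is immediately bounded by $\Lambda_k^{1-\alpha}\norm{f}_2^2$; the work is on $[0,1-\Lambda_k^{-1}]$, where the weight is large but $|f|$ is small because every exponent exceeds $\Lambda_k$. There I would split into dyadic rings $1-x\in[2^j\Lambda_k^{-1},2^{j+1}\Lambda_k^{-1}]$, estimate $\norm{f}_{L^2(\mathrm{ring})}$ by a Bernstein-type inequality for the block (in the spirit of Lemma~\ref{le:Bernstein}), and sum the resulting geometric series in $j$. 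The delicate point, and the reason quasilacunarity (rather than lacunarity) makes this harder, is that the monomials within a block need not be well separated, so $G_k$ may be badly conditioned; controlling an arbitrary $f\in F_k$ uniformly — not merely monomial by monomial — while keeping the power of $\Lambda_k$ sharp is the crux.
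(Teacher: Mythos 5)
Your global skeleton coincides with the paper's: decompose into the quasilacunary blocks $F_k$, take an orthonormal basis inside each block so that the union is a Riesz basis (via Lemma~\ref{le:quasilacunary}), apply Corollary~\ref{co:Schatten von Neumann}~(ii), and win by geometric decay of the block contributions in $k$. But the heart of the argument --- the block estimate --- is where your proposal has genuine gaps, one of which you acknowledge and one of which you do not. The unacknowledged one comes first: the passage from $\tau_k=\int K_k\,d\mu=\Tr\bigl(G_k^{-1}G_k^\mu\bigr)$ to $C\alpha\int_0^1 K_k(x)(1-x)^{\alpha-1}\,dx$ does not follow from applying Lemma~\ref{le:elementary inequality} entrywise to $\int x^{\lambda_n+\lambda_m}\,d\mu$. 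Entrywise domination of $G_k^\mu$ by the weighted Gram matrix does not pass through the trace against $G_k^{-1}$, whose off-diagonal entries are signed; what you would actually need is the quadratic-form domination $\int f^2\,d\mu\le C\alpha\int f^2(1-x)^{\alpha-1}\,dx$ for all $f\in F_k$ (equivalently, that $K_k$, which is positive but not increasing, can be fed into Lemma~\ref{le:elementary inequality}), and that is itself a nontrivial two-weight inequality on a possibly badly conditioned block. The acknowledged gap is the final weighted reproducing-kernel concentration bound, which you reduce everything to but do not prove; as you note, the ill-conditioning of $G_k$ within a block is exactly the obstruction, and your dyadic-ring-plus-Bernstein sketch does not resolve it. A further minor error: $\psi(x)=O((1-x)^{-1/2})$ is \emph{not} valid for general quasilacunary $\Lambda$ (the distances $d_n$ can be arbitrarily small when exponents inside a block nearly coincide), so your appeal to Corollary~\ref{co:boundedness rho} for boundedness is unjustified --- though boundedness would anyway follow a posteriori from the Schatten estimate.

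The paper circumvents both difficulties by never comparing quadratic forms on $F_k$ and instead estimating each basis vector $\phi_i\in F_k$ individually. Lemma~\ref{le:point estimate} with equal weights $\beta_j=1/(n_{k+1}-n_k)$ gives the pointwise bound $|\phi_i(x)|\le 2\|\phi_i\|_\infty\sum_j x^{\lambda_j\beta_j}$ by an \emph{increasing} majorant, so Lemma~\ref{le:elementary inequality} applies legitimately and produces Beta-function integrals of order $\lambda_{n_k+1}^{-\alpha}$. The price is the factor $\|\phi_i\|_\infty^2$, which is where the ill-conditioning hides; the paper controls it by the elementary inequality $\|f\|_1\ge\min\{\|f\|_\infty/4,\ \|f\|_\infty^2/(2\|f'\|_\infty)\}$ applied to $f=\phi_i^2$ together with the Bernstein-type Lemma~\ref{le:Bernstein}, yielding $\|\phi_i\|_\infty\le\max\{2,\,C\lambda_{n_{k+1}}^{1/2}\}$. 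The hypothesis $\alpha>1$ is then exactly what absorbs this possible growth: $\lambda_{n_{k+1}}^{1/2}\cdot\lambda_{n_k+1}^{-\alpha/2}\lesssim\lambda_{n_k+1}^{-(\alpha-1)/2}$ still decays geometrically along the blocks, and summing gives membership in $\sh_q$ for $q\le 2$, hence for all $q>0$. If you want to salvage your route, you would have to prove the two-weight inequality on $F_k$ with a constant independent of the block's conditioning; the paper's $\|\cdot\|_\infty$ detour is precisely the device that makes such a uniform statement unnecessary.
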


\begin{proof}
Again the  letter $C$ will be used for possibly different universal constants.
Suppose then that $\{n_k\}$ is a sequence of
integers with $N:=\sup_k(n_{k+1}-n_k)<\infty$, such that for  some $\gamma>1$ we have
 $\gamma\le\lambda_{n_{k+1}}/\lambda_{n_k+1}\le \gamma^{2(N-1)}$.
With the notations of Lemma~\ref{le:quasilacunary}, each subspace $F_k$ has dimension $n_{k+1}-n_k$. If we choose an orthonormal basis in each of the spaces $F_k$, it follows easily from Lemma~\ref{le:quasilacunary} that the union of these bases is a Riesz basis in $M^2_\Lambda$. Let us denote by $(\phi_i)$ this basis; we may also assume it  consists of real functions.


Suppose $\phi_i\in F_k$. Applying Lemma~\ref{le:point estimate}
to $\phi_i$ with $\beta_j=\beta=1/(n_{k+1}-n_k)$ and  Lemma~\ref{le:elementary inequality} for $g(x)=x^{2\beta\lambda_{n_k+1}}$ and $\rho(x)=Cx^{\alpha}$, we obtain
\[
\begin{split}
\norm{i_\mu\phi_i}^2_{L^2(\mu)}&=\int^1_0\abs{\phi_i(x)}^2d\mu(x)
\leq 4\norm{\phi_i}^2_\infty\int_0^1(\sum_{j=n_k+1}^{n_{k+1}}x^{\lambda_j\beta_j})^2d\mu(x)\\
&\hskip-1.2cm\leq4^{(N+1)}\norm{\phi_i}^2_\infty\sum_{j=n_k+1}^{n_{k+1}}\int_0^1x^{2\lambda_j\beta_j}d\mu(x)
\leq4^{(N+1)}\norm{\phi_i}^2_\infty N\int_0^1x^{2\beta\lambda_{n_k+1}}d\mu(x)\\
&\hskip-1.2cm\leq4^{(N+1)}\norm{\phi_i}^2_\infty N\int_0^1x^{2\beta\lambda_{n_k+1}}(\alpha C(1-x)^{\alpha-1})dx
\leq C\norm{\phi_i}^2_\infty B(2\beta\lambda_{n_k+1}+1,\alpha).
\end{split}
\]
The Euler beta function satisfies the following asymptotic formula:
if $t$ is large and $s$ is fixed, then
\begin{equation*}
B(t,s)=\int^1_0x^{t-1}(1-x)^{s-1}dx\sim \Gamma(s)t^{-s}.
\end{equation*}
Therefore
\begin{equation}\label{eq:zero estimate}
\norm{i_\mu\phi_i}^2_{L^2(\mu)} \leq C \norm{\phi_i}^2_\infty \Gamma(\alpha)(2\beta\lambda_{n_k+1}+1)^{-\alpha}
\leq C\frac{\norm{\phi_i}^2_\infty }{\lambda_{n_k+1}^\alpha}
.
\end{equation}

It is elementary to see (and can be found in~\cite{Dan10}, Lemma 5.4) that
if $f:[0,1]\rightarrow \Real$ is a nonconstant differentiable function, then
$$\norm{f}_1\geq \min\{\frac{\norm{f}_\infty}{4},\frac{\norm{f}^2_\infty}{2\norm{f'}_\infty}\}.$$
We apply this inequality to  $f=\phi_i^2$. If the minimum is given by the first term above, then $\|\phi_i\|_\infty^2\le 4 \|\phi_i\|^2_2=4$, whence from~\eqref{eq:zero estimate} it follows that
\begin{equation}\label{eq:first estimate}
\norm{i_\mu\phi_i}_{L^2(\mu)}\lesssim \frac{4}{\lambda_{n_k}^{\alpha/2}} \lesssim\frac{1}{\gamma^{\alpha(k-1)/2}}.\end{equation}

If the minimum  is given by the second term, that is, $\norm{f}^2_\infty\leq 2\norm{f}_1\norm{f'}_\infty$, noting that $\norm{f}_\infty=\norm{\phi_i}^2_\infty$, $\norm{g}_1=\norm{\phi_i}^2_2=1$, and $f'=2\phi_i\phi_i'$, we obtain, using Lemma~\ref{le:Bernstein},
 \[
 \norm{\phi_i}^{4}_\infty\leq 4\norm{\phi_i}^2_2\norm{\phi_i'}_\infty
\norm{\phi_i}_\infty\leq 4KN\lambda_{n_{k+1}}\norm{\phi_i}^2_\infty.
\]
Therefore $\norm{\phi_i}_\infty
\leq C\lambda_{n_{k+1}}^{1/2}$, whence, again by~\eqref{eq:zero estimate},
\[
\norm{i_\mu\phi_i}_{L^2(\mu)}\leq C\frac{\norm{\phi_i}_\infty}{\lambda_{n_k+1}^{\alpha/2}}
\leq C\frac{\lambda_{n_{k+1}}^{1/2}}{\lambda_{n_k+1}^{\alpha/2}}\leq C \Big(\frac{\lambda_{n_{k+1}}}{\lambda_{n_k+1}}\Big)^{1/2}\frac{1}{{\lambda_{n_k+1}}^{(\alpha-1)/2}}.\]
The factor $(\lambda_{n_{k+1}}/\lambda_{n_k+1})^{1/2}$ is bounded by $\gamma^{N-1}$, and therefore
\begin{equation}\label{eq:second estimate}
\norm{i_\mu\phi_i}_{L^2(\mu)}\leq C\frac{1}{\lambda_{n_k}^{\frac{\alpha-1}{2}}}
\leq C\frac{1}{\gamma^{\frac{(\alpha-1)(k-1)}{2}}}.
\end{equation}

Note that the inequalities ~\eqref{eq:first estimate} and ~\eqref{eq:second estimate} have been obtained for
$ i=n_k+1,\ldots,n_{k+1}$. They imply, if $q\le 2$, that
\[
\begin{split}
\sum_{i=1}^\infty\norm{i_\mu \phi_i}^q_{L^2(\mu)}
&=\sum_{k=0}^\infty\sum_{i=n_k+1}^{n_{k+1}}\norm{i_\mu \phi_i}^q_{L^2(\mu)}\\
&\leq C
N\left(\sum_{k=0}^\infty\frac{1}{(\gamma^{\alpha q/2})^{k-1}}+\sum_{k=0}^\infty\frac{1}{(\gamma^{(\alpha-1)q/2})^{k-1}}\right)<\infty.
\end{split}
\]
By Corollary~\ref{co:Schatten von Neumann} (ii), it follows that
 $i_\mu \in \Shq$ for all $q\leq 2$, and therefore for all $q>0$.\end{proof}

In particular, for $\Lambda$ lacunary the condition $\mu(J_\eps)\leq C\eps^\alpha$ for some $\alpha>1$ implies that the embedding is in all Schatten--von Neumann classes.



\section{Examples}\label{se:examples}

\subsection{}
In the first example we intend to construct a measure $\mu$ and a sequence $\Lambda$ such that $\mu$ is $\Lambda_p$-embedding for
$p=2$ but not for $p=1$. As above, we will use the same letter $C$ for possibly different universal constants.

Take $\mu=\sum_kc_k\delta_{a_k}$, with $0<a_k<1$. We will define recurrently $\lambda_n\to\infty$, $a_n\to 1$, and $c_n\to 0$
such as to have:

(A) \ $\sup_n\lambda_nc_na_n^{\lambda_n}=\infty$;

(B) \ $\sum_k\lambda_nc_ka_k^{2\lambda_n}\leq C \frac{\ln n}{n^2}$.

\smallskip

First, one can start with $\lambda_1=1$, $a_1=1/2$ and $c_1=1$. Suppose then that $\lambda_k,a_k,c_k$ have been obtained for $k\leq n-1$. Choose first $\lambda_n$ sufficiently large such that

(i) \ $\lambda_n\sum_{k\leq n-1}a_k^{\lambda_n}\leq \frac{1}{n^2}$;

(ii) \ $\lambda_{n+1}\geq n^4\lambda_n$.

Put $a_n=1-\frac{2\ln n}{\lambda_n}$ and $c_n=\frac{2n^2\ln n}{\lambda_n}$. Then
\begin{equation}\label{eq:10}
a_n^{\lambda_n}=(1-\frac{2\ln n}{\lambda_n})^{\lambda_n}\sim e^{-2\ln n}=\frac{1}{n^2},\end{equation}
whence
$\lambda_nc_na_n^{\lambda_n}\sim\ln n$
and thus (A) is satisfied.

To achieve (B),  write
\begin{equation}\label{eq:11}
\sum_k\lambda_nc_ka_k^{2\lambda_n}=\sum_{k\leq n-1}\lambda_nc_ka_k^{2\lambda_n}
+\lambda_nc_na_n^{2\lambda_n}+\sum_{k\geq n+1}\lambda_nc_ka_k^{2\lambda_n}.\end{equation}
The first sum is smaller than $\frac{1}{n^2}$ by (i). The second term is of order $\frac{\ln n}{n^2}$
by~\eqref{eq:10} and ~\eqref{eq:11}. For the third term, we have $$\sum_{k\geq n+1}\lambda_nc_ka_k^{2\lambda_n}
\leq\lambda_n\sum_{k\geq n+1}c_k.$$
From (ii) it follows in particular that $c_n$ decreases faster than a geometric progression
(which also proves the convergence of the sum defining $\mu$), and thus for some constant $C$ we have
$$\sum_{k\geq n+1}c_k\leq Cc_{n+1}=\frac{C2(n+1)^2\ln(n+1)}{\lambda_{n+1}}.$$
Applying again (ii), $$\lambda_n\sum_{k\geq n+1}c_k\leq C\frac{\ln n}{n^2}.$$
So we have estimated all three terms of (3.3) by $\frac{\ln n}{n^2}$, whence (B) is satisfied.

Now (ii) implies that $\Lambda$ is lacunary and the functions $g_k(x)=\lambda_k^{1/2}x^{\lambda_k}$ form a Riesz basis in $M^2_\Lambda$. Thus
\begin{equation}\label{eq:21}
\|\sum_kb_kg_k\|^2_2\sim\sum_k\abs{b_k}^2.\end{equation}

On the other hand, \begin{equation}\label{eq:22}
\|\sum_kb_kg_k\|^2_{L^2(\mu)}\leq(\sum_k\abs{b_k}\norm{g_k}_{L^2(\mu)})^2
\leq(\sum_k\abs{b_k}^2)(\sum_k\norm{g_k}^2_{L^2(\mu)}).\end{equation}
According to (B), we have $$\norm{g_n}^2_{L^2(\mu)}=\sum_kc_k\lambda_na_k^{2\lambda_n}\leq C\frac{\ln n}{n^2},$$
and thus $\sum_k\norm{g_k}^2_{L^2(\mu)}<\infty$.
So it follows from~\eqref{eq:21} and~\eqref{eq:22} that
$$\|\sum_kb_kg_k\|^2_{L^2(\mu)}\leq C (\sum_k\norm{g_k}^2_{L^2(\mu)})(\|\sum_kb_kg_k\|^2_2).$$
Thus $\mu$ is $\Lambda_2$-embedding.

On the other side, for $p=1$
$$\norm{\lambda_nx^{\lambda_n}}_{L^1(\mu)}=\int_{[0,1]}\lambda_nx^{\lambda_n}d\mu=\sum_kc_k\lambda_na_k^{\lambda_n}
\geq c_n\lambda_na_n^{\lambda_n}.$$
So by (A) $$\sup_n\norm{\lambda_nx^{\lambda_n}}_{L^1(\mu)}\geq\sup_n c_n\lambda_na_n^{\lambda_n}=\infty,$$
whence $\norm{\lambda_nx^{\lambda_n}}_1\leq 1$ for all $n=1,2,\ldots$.
Hence $\mu$ is not $\Lambda_1$-embedding.

\subsection{}

In this example we consider the Hilbert space $M^2_\Lambda$. We will show that for
any $0<r<q$ we can construct a lacunary sequence $\Lambda$
and a measure $\mu$  such that
 $i^2_\mu\notin \sh_r$ but $i^2_\mu\in \sh_q$.

Fix $q>r>0$. Choose a sequence
$\{\alpha_n\}\in\ell^q$ with $\abs{\alpha_n}<1$ but $\{\alpha_n\}\notin\ell^r$, and a double sequence $\{\beta_{nm}\}_{n,m=1}^\infty$
with $\sum_n\sum_m\beta_{nm}<\frac{1}{4}$.

The measure will again be of the form $\mu=\sum_j c_j\delta_{a_j}$, with $0<a_j<1$; we will construct  $a_n,c_n$ as well as $\lambda_n$ recurrently.
Suppose $a_j,c_j,\lambda_j$ have been obtained for $j\leq n-1$.  Choose first $\lambda_n$ sufficiently large such that $\Lambda$ is lacunary,
and
\begin{align}
\sum_{i=1}^{n-1}c_i\lambda_na_i^{2\lambda_n}&\leq\frac{1}{8}\alpha_n^2,\label{eq:72}\\
\sum_{i=1}^{n-1}c_i\lambda_j^{1/2}\lambda_n^{1/2}
a_i^{\lambda_j+\lambda_n}&\leq\frac{1}{4}\alpha_j\alpha_n\beta_{jn}^{1/2}\quad\text{for $j=1,\ldots,n-1$},\label{eq:73}\\
\frac{\alpha_n^2\lambda_i^{1/2}\lambda_j^{1/2}}
{\lambda_n}&\leq\frac{1}{2^{n+2-\max\{i,j\}}}
\alpha_i\alpha_j\beta_{ij}^{1/2}\quad\text{for $i,j<n$},\label{eq:74}\\
\frac{\alpha_n^2\lambda_i^{1/2}}{\lambda_n^{1/2}}&\leq
\frac{1}{2}\alpha_i\alpha_n\beta_{in}^{1/2}\quad\text{for $i,j<n$}.\label{eq:75}
\end{align}

Take then $a_n=e^{-1/2\lambda_n}$ and $c_n=\frac{\alpha_n^2}{\lambda_n}$; we have then
$c_n\lambda_na_n^{2\lambda_n}=\frac{\alpha_n^2}{e}$.

Consider then the Riesz basis $(g_n)=(\lambda_n^{1/2}x^{\lambda_n})$ for $M^2_\Lambda$. By~\eqref{eq:72} and~\eqref{eq:74}, we have
 \[
 \begin{split}
 \norm{i^2_\mu g_n}_{L^2(\mu)}^2&=\int_{[0,1]}\lambda_n x^{2\lambda_n}d\mu=\sum_jc_j\lambda_n a_j^{2\lambda_n}
\\ &=\sum_{j\leq n-1}c_j\lambda_na_j^{2\lambda_n}+c_n\lambda_n a_n^{2\lambda_n}+\sum_{j\geq n+1}c_j
\lambda_n a_j^{2\lambda_n}\\
&\leq\frac{1}{8}\alpha_n^2+\alpha_n^2+\sum_{j\geq n+1}\frac{\alpha_j^2\lambda_na_j^{2\lambda_n}}{\lambda_j}\leq\frac{1}{8}\alpha_n^2+\alpha_n^2+\sum_{j\geq n+1}\frac{1}{2^{j+2-n}}\alpha_n^2\beta_{nn}^{1/2}\\
&\leq\frac{1}{8}\alpha_n^2+\alpha_n^2+\frac{1}{4}\alpha_n^2<\frac{3}{2}\alpha_n^2.
 \end{split}
  \]
Since clearly $\norm{i^2_\mu g_n}_{L^2(\mu)}^2\ge c_n\lambda_na_n^{2\lambda_n}$  we obtain
 \begin{equation}\label{eq:77}\frac{1}{e}\alpha_n^2
 \le\norm{i^2_\mu g_n}_{L^2(\mu)}^2\le\frac{3}{2}\alpha_n^2.\end{equation}

If we define $f_n=i^2_\mu g_n/\norm{i^2_\mu g_n}_{L^2(\mu)}$, then $f_n$ are nonnegative functions, and~\eqref{eq:77} implies that $\langle f_n,f_m\rangle_{L^2(\mu)}\le e\alpha_n^{-1}\alpha_m^{-1}\lambda_n^{1/2}\lambda_m^{1/2}\int_{[0,1]}x^{\lambda_n+\lambda_m}d\mu(x)$. The last quantity can be estimated
 using~\eqref{eq:73}, \eqref{eq:74}, and \eqref{eq:75};
 we have, for $n>m$,
\[
\begin{split}
&\hskip-.5cm e\alpha_n^{-1}\alpha_m^{-1}\lambda_n^{1/2}\lambda_m^{1/2}\int_{[0,1]}x^{\lambda_n+\lambda_m}d\mu(x)=e\alpha_n^{-1}\alpha_m^{-1}\lambda_n^{1/2}\lambda_m^{1/2}\sum_jc_ja_j^{\lambda_n+\lambda_m}\\
&=e\alpha_n^{-1}\alpha_m^{-1}(\sum_{j\leq n-1}c_j\lambda_n^{1/2}\lambda_m^{1/2}a_j^{\lambda_n+\lambda_m}
+c_n\lambda_n^{1/2}\lambda_m^{1/2}a_n^{\lambda_n+\lambda_m}
+\sum_{j\geq n+1}c_j\lambda_n^{1/2}\lambda_m^{1/2}a_j^{\lambda_n+\lambda_m})\\
&\leq e\alpha_n^{-1}\alpha_m^{-1}(\frac{1}{4}\alpha_n\alpha_m\beta_{nm}^{1/2}+
\frac{\alpha_n^2}{\lambda_n}\lambda_n^{1/2}\lambda_m^{1/2}a_n^{\lambda_n+\lambda_m}
+\sum_{j\geq n+1}\frac{\alpha_j^2}{\lambda_j}\lambda_n^{1/2}\lambda_m^{1/2}a_j^{\lambda_n+\lambda_m})\\
&\leq e\alpha_n^{-1}\alpha_m^{-1}(\frac{1}{4}\alpha_n\alpha_m\beta_{nm}^{1/2}+
\frac{\alpha_n^2\lambda_m^{1/2}}{\lambda_n^{1/2}}
+\sum_{j\geq n+1}\frac{\alpha_j^2\lambda_n^{1/2}\lambda_m^{1/2}}{\lambda_j})\\
&\leq e\alpha_n^{-1}\alpha_m^{-1}(\frac{1}{4}\alpha_n\alpha_m\beta_{nm}^{1/2}+
\frac{1}{2}\alpha_m\alpha_n\beta_{mn}^{1/2}
+\sum_{j\geq n+1}\frac{1}{2^{j+2-n}}\alpha_m\alpha_n\beta_{mn}^{1/2})\\
&\leq\frac{e}{4}\beta_{nm}^{1/2}+\frac{e}{2}\beta_{nm}^{1/2}+\frac{e}{4}\beta_{nm}^{1/2}=e\beta_{nm}^{1/2}.
\end{split}
\]

Therefore
\begin{equation}\label{eq:78}
\sum_{n\neq m}\abs{\langle f_n,f_m\rangle_{L^2(\mu)}}^2=2\sum_{n>m}\langle f_n,f_m
\rangle_{L^2(\mu)}^2\leq e\sum_{n,m}\beta_{nm}<\frac{e}{4}.
\end{equation}

Consider the Gramian $\Gamma=\{\langle f_n,f_m\rangle\}_{n,m=1}^\infty$ of the sequence $(f_n)$. If we define $\Gamma_0=\Gamma-I$, then
\[
\begin{split}
\norm{\Gamma_0}_{\sh_2}^2&=\sum_n\norm{\Gamma_0 e_n}_{\ell^2}^2
=\sum_{n,m}\abs{\langle \Gamma_0e_n,e_m\rangle_{\ell^2}}^2\\&=\sum_{n\neq m}\abs{\langle
\Gamma e_n,e_m\rangle_{\ell^2}}^2
=\sum_{n\neq m}\abs{\langle f_n,f_m\rangle_{L^2(\mu)}}^2<\frac{e}{4}
\end{split}
\]
by~\eqref{eq:78}. Therefore $\norm{\Gamma_0}\leq\norm{\Gamma_0}_{\sh_2}<\sqrt{e}/2$, whence $\Gamma$ is invertible. This implies by Lemma~\ref{le:gramian}~(i) that $(f_n)$ is a Riesz sequence in $L^2(\mu)$.

By~\eqref{eq:77} and the choice of $\alpha_n$, the sequence $(\norm{i^2_\mu g_n})$ is in $\ell_q$ but not in $\ell_r$. Corollary~\ref{co:Schatten von Neumann}~(i) implies then that $i^2_\mu$ is in $\sh_q$ but not in $\sh_r$, as desired.

\section{Final remarks}

It is often the case when dealing with M\"untz spaces that  results that are
valid for lacunary sequences can be extended, albeit sometimes after significant
work, to the quasilacunary case. We have already seen such an situation
in Section~\ref{se:schatten von neumann}, where the proof of Theorem~\ref{th:quasilacunary schatten von neumann} would be actually simpler if one assumes $\Lambda$ lacunary.
In particular, the continuity  of the embedding $i^1_\mu$ for sublinear measures
is shown in~\cite{Dan10} for quasilacunary sequences. However, a similar result
is not yet proved for $i^2_\mu$; at least the proof of our
Theorem~\ref{th:lacunary embedding 2} does not seem to extend easily to
quasilacunary sequences. It is an open problem to provide such an extension.

Another interesting open question is the possible extension of Corollary~\ref{co:interpolation} to the range $2<p<\infty$; in particular, is it true in that case that, if $\Lambda$ is lacunary, then any sublinear measure is $\Lambda_p$-embedding?

Finally, let us note that an important application of embedding theorems is the study of boundedness properties of composition and multiplication operators with domain M\"untz spaces (see~\cite{Alam08, Alam09, Dan10}). We will give such applications in a forthcoming paper.


\end{document}